\documentclass[11pt,letterpaper]{article}

\usepackage[top=2cm, bottom=4.5cm, left=2.5cm, right=2.5cm]{geometry}

\usepackage{amsmath, amsfonts, amsthm, verbatim,
	amssymb,dsfont,color, accents}

\usepackage[utf8x]{inputenc}
\usepackage{mathtools}

\usepackage{tikz}
\usetikzlibrary{arrows.meta,arrows,decorations.pathreplacing}
\RequirePackage[colorlinks,citecolor=blue,urlcolor=blue]{hyperref}

\newcommand{\R}{\mathbb{R}}

\newcommand{\N}{\mathbb{N}}
\newcommand{\E}{\mathbb{E}}

\renewcommand{\P}{\mathbb{P}}

\newcommand{\calf}{{\cal F}}

\theoremstyle{plain}
\newtheorem{theorem}{Theorem}[section]
\newtheorem{definition}[theorem]{Definition}
\newtheorem{lemma}[theorem]{Lemma}
\newtheorem{prop}[theorem]{Proposition}
\newtheorem{corollary}[theorem]{Corollary}
\newtheorem{condition}{Condition}
\newtheorem{Assumption}{Assumption}
\renewcommand{\theAssumption}{\Alph{Assumption}}
\makeatletter
\newcommand{\settheoremtag}[1]{
	\let\oldtheAssumption\theAssumption
	\renewcommand{\theAssumption}{#1}
	\g@addto@macro\endAssumption{
		\addtocounter{Assumption}{-1}
		\global\let\theAssumption\oldtheAssumption}
}
\makeatother
\theoremstyle{remark}
\newtheorem{remark}[theorem]{Remark}

\newcommand{\eps}{{\epsilon}}

\newcommand{\Ga}{{\Gamma}}

\newcommand{\si}{{\sigma}}

\newcommand{\Om}{{\Omega}}

\newcommand{\ov}{\overline}

\newcommand{\wh}{\widehat}
\newcommand{\wt}{\widetilde}

\newcommand{\lec}{\lesssim}

\newcommand{\Den}{\Delta_n}

\newcommand{\bthm}{\begin{theorem}}
	\newcommand{\ethm}{\end{theorem}}

\newcommand{\bcor}{\begin{corollary}}
	\newcommand{\ecor}{\end{corollary}}

\newcommand{\blem}{\begin{lemma}}
	\newcommand{\elem}{\end{lemma}}

\newcommand{\bprop}{\begin{prop}}
	\newcommand{\eprop}{\end{prop}}

\newcommand{\bcond}{\begin{condition}}
	\newcommand{\econd}{\end{condition}}

\newcommand{\bdf}{\begin{definition}}
	\newcommand{\edf}{\end{definition}}

\newcommand{\bex}{\begin{example}}
	\newcommand{\eex}{\end{example}}

\newcommand{\brem}{\begin{remark}}
	\newcommand{\erem}{\end{remark}}

\newcommand{\bpr}{\begin{proof}}
	\newcommand{\epr}{\end{proof}}

\newcommand{\benu}{\begin{enumerate}}
	\newcommand{\eenu}{\end{enumerate}}

\newcommand{\beq}{\begin{equation}}
	\newcommand{\eeq}{\end{equation}}

\newcommand{\bit}{\begin{itemize}}
	\newcommand{\eit}{\end{itemize}}

\newcommand{\bass}{\begin{Assumption}}
	\newcommand{\eass}{\end{Assumption}}

\newcommand{\bff}{\textbf}

\numberwithin{equation}{section}

\allowdisplaybreaks[3]

\newcommand{\lpnorm}[2]{\left|\left|{#1}\right|\right|_{L^{#2}}}

\DeclareMathOperator{\Cov}{Cov}
\DeclareMathOperator{\Var}{Var}

\allowdisplaybreaks

\graphicspath{ {./figures/} }

\title{Pre-averaging  fractional processes contaminated by noise,\\ with an application to turbulence}
\author{David Chen\thanks{Columbia College, Columbia University, 1130 Amsterdam Avenue, New York, NY 10027, USA, e-mails: \url{dc3451@columbia.edu}, \url{bjm2190@columbia.edu}}, Yu Cheng\thanks{Department of Earth and Planetary Sciences, Harvard University, 20 Oxford Street, Cambridge, MA 02138, USA, e-mail: \url{yucheng1@fas.harvard.edu}}, Carsten Chong\thanks{Department of Statistics, Columbia University, 1255 Amsterdam Avenue, New York, NY 10027, USA, e-mail: \url{chc2169@columbia.edu}}, Pierre Gentine\thanks{Department of Earth and Environmental Engineering and Earth Institute, Columbia University, 500 West 120th Street, New York, NY 10027, USA, e-mail: \url{pg2328@columbia.edu}},\\ Wangdong Jia\thanks{The Fu Foundation School of Applied Engineering and Science, Columbia University, 500 West 120th Street, New York, NY 10027, USA, e-mail: \url{wj2315@columbia.edu}}, Bryce Monier$^\ast$\!\!\!, and Shiyang Shen\thanks{School of General Studies, Columbia University, 2970 Broadway, New York, NY 10027, USA, e-mail: \url{ss6101@columbia.edu}}}
\date{}

\begin{document}
\maketitle

\begin{abstract}
In this article, we consider the problem of estimating fractional processes based on noisy high-frequency data. Generalizing the idea of   pre-averaging to a fractional setting, we exhibit a sequence of consistent estimators for the unknown  parameters of interest by proving a law of large numbers for associated variation functionals. In contrast to the semimartingale setting, the  optimal window size for pre-averaging depends on the unknown roughness parameter of the underlying process. We evaluate the performance of our estimators in a simulation study and use them to   empirically verify Kolmogorov's $2/3$-law in  turbulence data contaminated by instrument noise.
\end{abstract}

\bigskip

	\noindent
	{\em AMS 2020 Subject Classifications:}  primary: 60F25, 60G22, 62M09, 76M35; secondary:
62G05, 76F55

\bigskip
\noindent
{\em Keywords:} fractional Brownian motion, high-frequency data, Hurst parameter, Kolmogorov's 2/3-law, law of large numbers, noisy observations, pre-averaging

\section{Introduction}

In 1941, Kolmogorov and Obukhov   derived some universal statistical properties of turbulent fields based on dimensional arguments (see \cite{K41} or \cite[Chapter 6]{Frisch95}). For example, \emph{Kolmogorov's $2/3$-law} postulates that the velocity structure function in a stationary homogeneous isotropic turbulent flow at sufficiently high Reynolds numbers satisfies the scaling property
\beq\label{eq:23} \E [ ((v(t,x+r)-v(t,x))\cdot \tfrac r{\lvert r\rvert} )^2 ] \sim C\lvert r\rvert^{2/3}, \eeq
where $v(t,x)$ is the velocity at time $t$ and position $x$ and the radius $r $ belongs to a certain \emph{inertial range} of scales. The corresponding relationship in the spectral domain is known as \emph{Kolmogorov's $-5/3$-law}.

The $2/3$- and $-5/3$-laws have been subjected to extensive experimental validation. In many cases, instead of verifying the spatial property \eqref{eq:23} directly, researchers analyze temporal measurements of $v(t,x)$ at a fixed spatial point $x$ (because such data are typically easier to obtain). Substituting temporal for spatial data is justified under the assumption of \emph{Taylor's frozen turbulence hypothesis} \cite{T38} according to which $v$ satisfies the same scaling in time, that is,
\beq\label{eq:23-2}  \E [ ((v(t,x+U\tau)-v(t,x))\cdot \tfrac r{\lvert r\rvert} )^2 ] \approx \E [ ((v(t+\tau,x)-v(t,x))\cdot \tfrac r{\lvert r\rvert} )^2 ] \sim C  \tau ^{2/3}, \eeq
where $U$ is the mean flow velocity in the $x$-direction.
The same scaling properties are expected to hold for scalar quantities such as temperature \cite{CLASG20}.

While the $2/3$- and $-5/3$-laws are largely supported by field experiments, deviations from the hypothesized scaling exponents have been observed as a consequence of, for example, stratification \cite{CLASG20}, the failure of Taylor's hypothesis \cite{Cheng17,Roy21} or measurement errors \cite{CLASG20, Dilling17,Durgesh14}. In this paper, we are interested in the question how to distinguish whether  deviations from the $2/3$- and $-5/3$-laws are due to physically relevant reasons (e.g., stratification, failure of Taylor's hypothesis) or simply measurement noise. 

Let us now frame this question in a more general mathematical framework: we  consider a broad class of stochastic processes of the form
 \begin{equation}\label{eq:X}
    X_t = X_0 + A_t + X_t^H = X_0 + \int_0^ta_sds + K_H^{-1}\int_0^t(t-s)^{H - \frac{1}{2}}\sigma_sdB_s,
  \end{equation}
where $B$ is a Brownian motion and  $H\in(0,1)$, called the \emph{roughness parameter} of $X$, is a measure of regularity for the process $X$. We included a  stochastic drift $a$ and a stochastic volatility $\si$ in \eqref{eq:X} to account for possible non-stationarity in the model. The number $$K_H=\frac{\Ga(H+\frac12)}{\sqrt{\Ga(2H+1)\sin(\pi H)}}$$ is   a normalizing constant. Under mild assumptions, the second moments of $X$ satisfy
\[ \E[(X_{t+\tau}-X_t)^2] \sim C\tau^{2H}  \]
for small $\tau$, while its
 spectral density $S$  satisfies
$$ S(f) \sim C'f^{-2H-1}$$
for large $f$ and some  $C,C'>0$ (see \cite{CHPP13}). In particular,  Kolmogorov's 2/3- and $-5/3$-laws correspond to $H=\frac13$ in this model.

In the absence of measurement noise, \cite{BNCP11,CHPP13} derived consistent and asymptotically mixed normal estimators of $H$ from high-frequency observations of $X$ and subsequently applied them to turbulence data from \cite{Dhruva00}; see also \cite{Bennedsen20}. In this paper, we are interesting in how to recover $H$ from noisy observations
\begin{equation} \label{eq:Z} Y_t = X_t + \rho_t Z_t, \end{equation}
where $Z_t$ is a white noise sequence and $\rho$ is a noise volatility process. 

In the case where $H=\frac12$, that is, if $X$ is a continuous It\^o semimartingale, this statistical problem has been intensively studied in financial econometrics (see \cite{BNHLS08,JLMPV09,PV09,ZMA05} and also the references in \cite[Chapter 7]{AJ14}). Much less is known if $H\neq \frac12$ and observations are noisy. In \cite{GH07}, the authors derived rate-optimal estimators of $H$ in the case where $a$ and $\si$ in \eqref{eq:X}  are constant and $H>\frac12$; in \cite{Szymanski22}, rate-optimal estimators and associated central limit theorems were obtained for all values of $H$, but still with constant $a$ and $\si$ (and also Gaussian $Z_t$). 

In this paper, we pursue a different direction by proposing a non-parametric estimator that is still consistent for $H$ if $a$, $\si$ and $\rho$ are random and time-varying and $Z$ is not necessarily Gaussian. This will be achieved by generalizing the pre-averaging approach of \cite{JLMPV09,PV09} to a fractional setting. The remaining paper is organized as follows: In Section~\ref{sec:main}, we introduce the estimators we propose and state the main technical result  of this paper (Theorem~\ref{thm:main}). In the same section, we will then use Theorem~\ref{thm:main} to construct consistent estimators of $H$ in Theorem~\ref{thm:H}. All proofs are postponed to Section~\ref{sec:proof}. In Section~\ref{sec:MC}, we carry out a Monte--Carlo simulation before we apply our estimators to noisy turbulence data in Section~\ref{sec:data}.

\section{Main result}\label{sec:main}

Consider a filtered probability space $(\Om,\calf',(\calf'_t)_{t\geq0},\P)$ together with a smaller $\si$-field $\calf$ and a smaller filtration $(\calf_t)_{t\geq0}$ satisfying $\calf_t\subseteq \calf'_t\cap \calf$ for all $t\geq0$. Our assumptions on the ingredients of \eqref{eq:X} and \eqref{eq:Z} are as follows.

\begin{Assumption}\label{ass:A}
Suppose that $X$ and $Y$ are given by \eqref{eq:X} and \eqref{eq:Z}, respectively, 
where $B$ is a standard $(\calf_t)_{t\geq0}$-Brownian motion, $X_0$ is an $\calf_0$-measurable random variable and $a$, $\si$ and $\rho$ are locally bounded $(\calf_t)_{t\geq0}$-adapted stochastic processes that are furthermore continuous in probability. Moreover, we assume that $(Z_t)_{t\geq0}$ is centered with unit variance and uniformly (in $t$) bounded moments of all orders, adapted to $(\calf'_t)_{t\geq0}$ and independent of $\calf$ and for different values of $t$.
\end{Assumption}

Our goal is to consistently estimate $H$ based on equally spaced high-frequency noisy observations
\[ \{ Y_{\frac in}: i=0,\dots, \lfloor nT\rfloor\} \]
of $X$,
where $T>0$ is a finite time horizon and $n$ is the number of observations per unit time. From our estimation procedure, we also obtain estimators of $C_t=\int_0^T \si_t^2 dt$ and $\Pi_t = \int_0^T \rho_t^2 dt$, the integrated volatility and integrated noise volatility.
In order to mitigate the impact of the noise $Z_t$, we rely on the pre-averaging approach of \cite{JLMPV09,PV09}. To this end, we choose a weight function \(g: [0,1] \rightarrow \R\) and a window size $k_n = \frac{n^\kappa}{\theta}$, where $\theta,\kappa>0$ are tuning parameters. Given a function $W$, random or not (but we have  $W=Y$ in mind), we write $g^n_j = g ( \frac{j}{k_n}  )$ and define
\begin{equation}\label{eq:prev-W}\begin{split}
 \overline{W}(g)^n_i = \sum_{j=1}^{k_n-1}g^n_j \Big( W_{\frac{i+j-1}{n}} - W_{\frac{i+j-2}{n}}  \Big) = \sum_{j=1}^{k_n-1}g^n_j\Delta_{i+j-1}^n W, \\
    \widehat{W}(g)^n_i = \sum_{j=1}^{k_n} (g^n_j - g^n_{j-1} )^2 (\Delta_{i+j-1}^n W )^2 = \sum_{j=1}^{k_n} (\Delta g^n_j )^2 (\Delta_{i+j-1}^n W )^2.
\end{split}\end{equation}
  Morally, the former  expression  is a \textit{weighted average} of the increments of $W$ (hence the name ``pre-averaging''), which we consider for $W=Y=X+Z$ in order to  smooth out  the noise component $Z$ while preserving the signal component $X$. Our main functionals of interest given by
 \begin{equation}\label{eq:Vnf}
    V(g)^{n,f}_T(Y) = \frac{1}{n}\sum_{i=1}^{ \lfloor nT  \rfloor}f\bigg( \frac{\overline{Y}(g)^n_i}{ ( k_n/n  )^H}, \frac{\widehat{Y}(g)^n_i}{ ( k_n/n  )^{2H}} \bigg)
  \end{equation} 
for some test function $f:\R^2 \to \R$.
%
The following theorem is our main technical result:
\begin{theorem}\label{thm:main}
In the setting of Assumption~\ref{ass:A}, further suppose that \(g: [0,1] \rightarrow \R\) is \(C^2\) with $g(0)=g(1)=0$ and 
 \(f: \R^2 \rightarrow \R\) is \(C^2\) with all partial derivatives up to order 2 of at most polynomial growth. Then, for any $T>0$, $\theta>0$ and  \(\kappa \in  [\frac{2H}{2H+1}, 1 )\), we have the convergence
  \begin{equation}\label{eq:LLN}
    V(g)^{n,f}_T(Y)  \overset{\mathbb{P}}{\longrightarrow} V(g)^{f}_T(Y)=\int_0^T \mu_f\bigg( \sigma_s^2\eta ( g  ), \Theta^{2H+1}\rho^2 \int_0^1g'(r)^2dr \bigg)ds,
  \end{equation}
  where
   \begin{equation}\label{eq:muf}
  	\mu_f(v_1, v_2) = \E\big[ f(\sqrt{v_1}Z_1 + \sqrt{v_2}Z_2, 2v_2) \big]
  \end{equation}
  with independent \(Z_1, Z_2 \sim N(0,1)\) 
  and
  \begin{equation}
    \Theta =
    \begin{cases}
      0 & \kappa \neq \frac{2H}{2H+1}, \\
      \theta &  \kappa = \frac{2H}{2H+1},
    \end{cases}\qquad
        \eta(g) = \frac12 \int_0^1\int_0^1g'(y)\lvert y-x\rvert^{2H} g'(x) dxdy.
\label{eq:etag}
  \end{equation}
\end{theorem}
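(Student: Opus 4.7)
The proof combines three classical ingredients: (1) localization to bounded coefficients, (2) exploitation of the conditional Gaussian structure of $\ov Y(g)^n_i$ given $\calf$, and (3) a Riemann-sum plus mixing argument to aggregate the local limits into the integral in \eqref{eq:LLN}. By a standard localization argument, I may assume $a$, $\si$ and $\rho$ are uniformly bounded. Decompose
\[ \ov Y(g)^n_i = \ov A(g)^n_i + \ov{X^H}(g)^n_i + \overline{\rho Z}(g)^n_i, \]
and split $\wh Y(g)^n_i$ analogously into signal, noise, and cross terms. Because $|\ov A(g)^n_i|\leq C k_n/n$ and $H<1$ forces $k_n/n = o((k_n/n)^H)$, the drift contribution is negligible after normalization.

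For the noise, Abel summation together with $g(0)=g(1)=0$ rewrites $\overline{\rho Z}(g)^n_i$ (up to $O(1/k_n)$ boundary terms) as $-\sum_{j}\Delta g^n_j\,\rho_{(i+j-1)/n} Z_{(i+j-1)/n}$, whose conditional variance given $\calf$ equals $k_n^{-1}\rho_{i/n}^2 \int_0^1 g'(r)^2\,dr$ to leading order. Normalizing by $(k_n/n)^{2H}$ and substituting $k_n=n^\kappa/\theta$ yields $\theta^{2H+1}\, n^{2H-\kappa(2H+1)}\rho_{i/n}^2\!\int_0^1 g'(r)^2\,dr$, which by the very definition of $\Theta$ tends to $\Theta^{2H+1}\rho_s^2\int_0^1 g'(r)^2\,dr = v_2$. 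An analogous direct computation shows $\wh{\rho Z}(g)^n_i/(k_n/n)^{2H}\to 2v_2$ in conditional $L^2$.

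The fractional signal $\ov{X^H}(g)^n_i$ carries the essential difficulty. Using \eqref{eq:X}, I rewrite it as a single Gaussian integral $\int_0^{t_{i+k_n-1}}\psi_i^n(u)\,\si_u\,dB_u$ with an explicit Volterra kernel $\psi_i^n$ obtained by collecting the pre-averaging weights. Freezing $\si_u\approx \si_{i/n}$ on the window of length $k_n/n$ and exploiting the self-similarity of fractional Brownian motion, a second Abel summation that converts the discrete weights $g^n_j$ into $g'$, combined with the covariance kernel $|y-x|^{2H}$ of the fBm increments, gives
\[ \E\bigl[\ov{X^H}(g)^n_i{}^2\bigm|\calf\bigr] = \si_{i/n}^2\,(k_n/n)^{2H}\,\eta(g) + o\bigl((k_n/n)^{2H}\bigr), \]
with $\eta(g)$ as in \eqref{eq:etag}. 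Scaling shows that $\wh X(g)^n_i/(k_n/n)^{2H} = O(k_n^{-(2H+1)})\to 0$ and that the cross-terms between $X^H$ and $\rho Z$ vanish by Cauchy--Schwarz after invoking the independence of $B$ and $Z$ from Assumption~\ref{ass:A}.

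To assemble the LLN, observe that conditionally on $\calf$, $\ov{X^H}(g)^n_i/(k_n/n)^H$ is Gaussian with variance converging to $v_1=\si_s^2\eta(g)$ and is independent of $\overline{\rho Z}(g)^n_i/(k_n/n)^H$, whose conditional variance converges to $v_2$. Hence the pair has the joint limit $(\sqrt{v_1}Z_1+\sqrt{v_2}Z_2,\,2v_2)$ realizing $\mu_f$. Polynomial growth of $f$ together with the uniform moment bounds of Assumption~\ref{ass:A} upgrade pointwise conditional convergence to $L^1$, so $\E[f(\cdots)\mid\calf]\to\mu_f\bigl(\si_{i/n}^2\eta(g),\,\Theta^{2H+1}\rho_{i/n}^2\textstyle\int_0^1 g'(r)^2\,dr\bigr)$. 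A Riemann-sum argument identifies $\frac1n\sum_i\E[f(\cdots)\mid\calf]$ with the right-hand side of \eqref{eq:LLN}, and a standard blocking estimate---exploiting that $\ov Y(g)^n_i$ and $\ov Y(g)^n_j$ are asymptotically independent when $|i-j|\gg k_n$ together with $k_n/n\to 0$---bounds the conditional variance of $V(g)^{n,f}_T(Y)$ by $O(k_n/n)\to 0$, upgrading convergence of the conditional mean to convergence in probability. The principal technical obstacle throughout is the treatment of the singular Volterra kernel $(t-u)^{H-\frac12}$ when $H<\tfrac12$: the freezing of $\si_u$ must be carried out after splitting $\ov{X^H}(g)^n_i$ into a local piece (where $\si_u$ can be replaced by $\si_{i/n}$) and a tail piece whose contribution is controlled by Hardy-type weighted inequalities, and the lower bound $\kappa\geq 2H/(2H+1)$ is precisely what ensures that the tail error is dominated by the noise-level $(k_n/n)^{H}$.
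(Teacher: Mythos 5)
Your proposal follows the same overall architecture as the paper's proof (localization, drift removal, Abel summation for the noise, conditional Gaussianity of the pre-averaged signal with limiting variance $\eta(g)$, a Riemann sum, and a variance bound to upgrade convergence of conditional means to convergence in probability), but two of the delicate steps are not correct as stated. First, you condition on $\calf$. Under Assumption~\ref{ass:A}, $\si$ and $B$ are adapted to $(\calf_t)_{t\ge0}$ with $\calf_t\subseteq\calf$, so $X^H$ and hence $\ov{X^H}(g)^n_i$ are $\calf$-measurable: conditionally on $\calf$ the signal is a constant, not a Gaussian with variance $\si_s^2\eta(g)$. The paper instead conditions on $\calf_{\frac in-\eps}$, and to make the signal conditionally Gaussian it must first (a) truncate the Volterra integral to $[\frac in-\eps,\cdot]$ and (b) freeze $\si_u$ at $\si_{i/n-\eps}$; only the frozen object $\si_{i/n-\eps}\,\ov{B^{H}}(g)^{n,\eps}_i$ is conditionally Gaussian, since $\int\psi(u)\si_u\,dB_u$ with adapted random $\si$ is not. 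You do mention freezing $\si$, but the conditioning as written does not produce the mixed-Gaussian structure you need to identify $\mu_f$.

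Second, your blocking estimate asserts that $\ov Y(g)^n_i$ and $\ov Y(g)^n_j$ are asymptotically independent when $\lvert i-j\rvert\gg k_n$. For a fractional process this is false: the kernel $(t-s)^{H-1/2}$ integrates from $0$, so every increment depends on the entire past of $B$, and pre-averaged blocks at macroscopic separation remain correlated with only polynomially decaying covariance. This is precisely the obstacle the paper's $\eps$-truncation removes: after truncation, $f_i$ depends on $B$ only through the window $(\frac in-\eps,\frac{i+k_n}{n}]$, the centered summands $f_i-\E[f_i\mid\calf_{i/n-\eps}]$ are \emph{exactly} uncorrelated once these windows are disjoint, and the variance is $O(\frac{k_n}{n}+\eps)$, which is why the double limit $n\to\infty$ followed by $\eps\to0$ is needed. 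Your ``tail piece controlled by Hardy-type inequalities'' gestures at this truncation but attaches it to the wrong purpose: you claim the lower bound $\kappa\ge\frac{2H}{2H+1}$ is what controls the Volterra tail, whereas the truncation error is $O(n^{-1})$ and harmless for any $\kappa\in(0,1)$; the constraint $\kappa\ge\frac{2H}{2H+1}$ is actually needed so that $\wh{\rho Z}(g)^n_i/(k_n/n)^{2H}\lec n^{2H}/k_n^{2H+1}$ --- the second argument of $f$ and the noise variance in the first --- stays bounded. Both gaps are repairable along the paper's lines, but as written the identification of the conditional limit and the LLN variance bound are incomplete.
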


Next, we explain how Theorem~\ref{thm:main} can be used to solve the original statistical problem. Note that $V(g)^{n,f}_T(Y)$ is not a statistic, as it involves the unknown parameter $H$. In order to circumvent this problem, we use a classical \emph{change-of-frequency} approach, see \cite{BNCP11,CHPP13}. More precisely, we consider 
$f(x,y)=x^2$ and  
form the ratio statistic
\begin{equation}\label{eq:R} 
	R(g)^{n,2}_T(Y)= \frac{\sum_{i=1}^{\lfloor nT/2 \rfloor}(\overline{Y}(g)^{n/2}_{i})^{2}}{\sum_{i=1}^{\lfloor nT \rfloor}(\overline{Y}(g)^{n}_{i})^{2}}, 
\end{equation} 
where the numerator is computed with half the original frequency (i.e., by taking increments of step size $\frac 2n$.)

%
%

\begin{theorem}\label{thm:H} Grant the assumptions of Theorem~\ref{thm:main}. Then the estimator
	\begin{equation}\label{eq:Hn}
		\widehat{H}_n = \frac{1}{2(1 - \kappa)}\big(1+\log_{2} R(g)^{n,2}_T(Y) \big)
	\end{equation}
is consistent for $H$, that is, satisfies
	 $\widehat{H}_n \to H$ in probability as $n \to \infty$.
\end{theorem}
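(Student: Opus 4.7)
The plan is to reduce Theorem~\ref{thm:H} to two applications of Theorem~\ref{thm:main} with the test function $f(x,y)=x^2$, for which a direct computation gives
\[ \mu_f(v_1,v_2)=\E[(\sqrt{v_1}Z_1+\sqrt{v_2}Z_2)^2]=v_1+v_2. \]
Setting
\[ L = \eta(g)\int_0^T\sigma_s^2\,ds+\Theta^{2H+1}\Big(\int_0^1 g'(r)^2\,dr\Big)\int_0^T\rho_s^2\,ds, \]
Theorem~\ref{thm:main} says
\[ V(g)^{n,f}_T(Y)=\frac{1}{n(k_n/n)^{2H}}\sum_{i=1}^{\lfloor nT\rfloor}\big(\overline{Y}(g)^n_i\big)^2\overset{\mathbb{P}}{\longrightarrow}L, \]
and the same limit $L$ is obtained with $n$ replaced by $n/2$ throughout, because the right-hand side of \eqref{eq:LLN} depends only on $\theta$ and $\kappa$ and not on the particular sequence along which the frequency tends to infinity.

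Rewriting both the numerator and the denominator of $R(g)^{n,2}_T(Y)$ in terms of these functionals gives the exact identity
\[ R(g)^{n,2}_T(Y)=\frac{(n/2)\bigl(k_{n/2}/(n/2)\bigr)^{2H}}{n(k_n/n)^{2H}}\cdot \frac{V(g)^{n/2,f}_T(Y)}{V(g)^{n,f}_T(Y)}. \]
Since $k_{n/2}/(n/2)=\theta^{-1}(n/2)^{\kappa-1}=2^{1-\kappa}\,(k_n/n)$, the deterministic prefactor simplifies to $\tfrac12\cdot 2^{2H(1-\kappa)}=2^{2H(1-\kappa)-1}$. By the two applications of Theorem~\ref{thm:main}, the stochastic factor tends to $1$ in probability on the event $\{L>0\}$, so
\[ R(g)^{n,2}_T(Y)\overset{\mathbb{P}}{\longrightarrow}2^{2H(1-\kappa)-1}. \]
A final invocation of the continuous mapping theorem, applied to $r\mapsto\tfrac{1}{2(1-\kappa)}(1+\log_2 r)$, yields
\[ \widehat H_n\overset{\mathbb{P}}{\longrightarrow}\frac{1}{2(1-\kappa)}\cdot 2H(1-\kappa)=H. \]

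The only step requiring any care is the passage from convergence of the individual functionals to convergence of their ratio, which needs $L>0$ almost surely. This in turn rests on $\eta(g)>0$ for the chosen weight $g$ (a positivity property analogous to those appearing in the noise-free fractional setting of \cite{BNCP11,CHPP13}) together with a standard non-degeneracy condition on $\sigma$; if $\sigma\equiv0$ then $X$ carries no information about $H$ and identifiability fails in any case. All other steps are algebraic manipulations of the window size $k_n=n^\kappa/\theta$ and two direct appeals to Theorem~\ref{thm:main}.
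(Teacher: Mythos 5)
Your proof is correct and follows essentially the same route as the paper's: apply Theorem~\ref{thm:main} with $f(x,y)=x^2$ at frequencies $n$ and $n/2$, extract the deterministic prefactor $2^{2H(1-\kappa)-1}=2^{2H-1-2H\kappa}$, and invert via the continuous mapping theorem. Your added observation that the common limit $L$ must be strictly positive (which requires $\eta(g)>0$ and non-degenerate $\sigma$) for the ratio to converge is a point the paper leaves implicit.
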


\begin{proof}
Let $f(x,y)=x^2$. Then by Theorem~\ref{thm:main}, both $V(g)^{n/2,f}_T(Y)$ and $V(g)^{n,f}_T(Y)$ converge in probability to the same limit. In conjunction with  the fact that $k_n\sim n^\kappa/\theta$, we obtain
	\begin{equation*}
	R(g)^{n,2}_T(Y)	=\frac{\frac n2 (2k_{n/2}/n)^{2H} V(g)^{n/2,f}_T(Y)}{n (k_n/n)^{2H}V(g)^{n,f}_T(Y)} \stackrel{\P}{\longrightarrow} 2^{2H-1-2H\kappa}
	\end{equation*}
from which we easily deduce $\widehat{H}_n \to H$ in probability after applying the transform on the right-hand side of \eqref{eq:Hn}.
%
\end{proof}

Next, we exhibit consistent estimators of $\Pi_t=\int_0^T \rho_t^2 dt$ and $C_T=\int_0^T \si_t^2dt$, the respective integrated variance of noise and signal.

\begin{prop}
	We assume the hypotheses of Theorem~\ref{thm:main}. Furthermore, let $F(x,y)=x^2-y/2$ and define
	\begin{equation}\label{eq:Pi}
		\widehat{\Pi}^n_T = \frac{1}{2\lfloor nT \rfloor}\sum_{i=1}^{\lfloor nT \rfloor} \Big(Y_{\frac{i}{n}} - Y_{\frac{i-1}{n}}\Big)^{2}
	\end{equation}
and
\begin{equation}\label{eq:C}
\wh C^n_T=	\frac{1}{\widehat{\eta}(g)}\wh V(g)_{T}^{n,F}(Y), 
\end{equation}
where $\wh V(g)_{T}^{n,F}(Y)$ and $\widehat{\eta}(g)$ are computed from \eqref{eq:Vnf} and \eqref{eq:etag}, respectively, but with the estimator
$\widehat{H}_n$ instead of $H$.
	Then, as $n\to\infty$, we have $\Pi^n_T\to \Pi_T$ and $\wh C^n_T \to C_T$ in probability.
\end{prop}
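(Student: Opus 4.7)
My plan is to treat the two claims separately. For $\widehat{\Pi}^n_T$, I would decompose each increment as $\Delta^n_i Y = \Delta^n_i X + (\rho_{i/n}Z_{i/n} - \rho_{(i-1)/n}Z_{(i-1)/n})$ and square. The signal-squared sum $\sum_{i=1}^{\lfloor nT\rfloor}(\Delta^n_i X)^2$ is of order $n^{1-2H}$ by standard $H$-H\"older moment bounds on fractional processes under Assumption~\ref{ass:A}, which becomes negligible after dividing by $\lfloor nT\rfloor$. The cross term $\sum\Delta^n_i X\,(\rho_{i/n}Z_{i/n}-\rho_{(i-1)/n}Z_{(i-1)/n})$ is conditionally centered given $\calf$ (since $Z$ is $\calf$-independent), and its conditional variance is bounded by a constant times $\sum(\Delta^n_i X)^2$, hence $O_\P(n^{1/2-H})$ in absolute value. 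For the dominant noise-squared term, expand into diagonals $\rho_{i/n}^2 Z_{i/n}^2 + \rho_{(i-1)/n}^2 Z_{(i-1)/n}^2$ plus an off-diagonal cross $-2\rho_{i/n}\rho_{(i-1)/n}Z_{i/n}Z_{(i-1)/n}$. The off-diagonal piece forms an $\calf$-conditional martingale-difference sum of order $O_\P(\sqrt n)$. Using that the $Z_i^2-1$ are $\calf$-conditionally uncorrelated with bounded moments, the diagonals concentrate around $\sum\rho_{i/n}^2$, which is a Riemann sum for $\int_0^T\rho_s^2\,ds$, giving the claimed limit.

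For $\widehat{C}^n_T$, the key computation is
\[ \mu_F(v_1,v_2) = \E\bigl[(\sqrt{v_1}Z_1+\sqrt{v_2}Z_2)^2 - v_2\bigr] = v_1+v_2-v_2 = v_1, \]
so Theorem~\ref{thm:main} applied to $f=F$ (which is $C^2$ with polynomial-growth partial derivatives) yields $V(g)^{n,F}_T(Y)\to\int_0^T\sigma_s^2\eta(g)\,ds = \eta(g)C_T$ in probability. Since $F$ is homogeneous of degree $2$ under the scaling $(x,y)\mapsto(\lambda x,\lambda^2 y)$, the normalization factor in \eqref{eq:Vnf} pulls out, and the plug-in version satisfies
\[ \widehat V(g)^{n,F}_T(Y) = \Bigl(\frac{k_n}{n}\Bigr)^{\!-2(\widehat H_n-H)} V(g)^{n,F}_T(Y). \]
The denominator $\widehat\eta(g)\to\eta(g)$ in probability by continuity of $H\mapsto\eta(g)$ in its kernel representation, so it remains to show the prefactor tends to $1$. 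Since $\log(n/k_n)\sim(1-\kappa)\log n$, this is equivalent to $(\widehat H_n-H)\log n = o_\P(1)$.

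This faster-than-logarithmic rate is what I expect to be the main obstacle, because bare consistency from Theorem~\ref{thm:H} is not enough. I would address it by revisiting the proof of Theorem~\ref{thm:main} and extracting a polynomial-in-$n$ rate for $V(g)^{n,f}_T(Y)$: since the functional is a sum of weakly dependent terms with explicit conditional moment structure, a careful bias-variance analysis should yield $O_\P(n^{-\gamma})$ for some $\gamma>0$ depending on $H$ and $\kappa$. Through the continuous mapping in \eqref{eq:Hn}, this transfers to $\widehat H_n - H = O_\P(n^{-\gamma})$, which easily kills the logarithmic factor and closes the argument.
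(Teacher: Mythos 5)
Your treatment of $\widehat{\Pi}^n_T$ is essentially the paper's own argument: decompose $\Delta^n_i Y=\Delta^n_i X+\Delta^n_i\rho Z$, discard the signal-squared and cross terms via $\lVert\Delta^n_i X\rVert_{L^p}\lesssim n^{-H}$, and apply a law of large numbers to the remaining noise-squared array, which conditionally on $\calf$ is a $1$-dependent sequence with bounded moments. Your diagonal/off-diagonal split is just a more explicit rendering of the paper's appeal to the weak LLN, and it is correct.

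For $\widehat{C}^n_T$ you also start on the paper's route (compute $\mu_F(v_1,v_2)=v_1$ and invoke Theorem~\ref{thm:main}), but you then isolate a point that the paper's one-line proof passes over in silence: replacing $H$ by $\widehat H_n$ in the normalization of \eqref{eq:Vnf} multiplies $V(g)^{n,F}_T(Y)$ by $(k_n/n)^{-2(\widehat H_n-H)}$, and since $\log(n/k_n)\asymp(1-\kappa)\log n$, sending this factor to $1$ requires $(\widehat H_n-H)\log n=o_{\P}(1)$ --- strictly more than the bare consistency delivered by Theorem~\ref{thm:H}. Identifying this is a genuine contribution of your write-up; the paper's claim that \eqref{eq:C} is ``a direct consequence'' of Theorems~\ref{thm:main} and~\ref{thm:H} tacitly assumes exactly this logarithmic rate. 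However, your proposed repair is only a plan, not a proof. Theorem~\ref{thm:main} as stated carries no rate, and extracting a polynomial one is not routine: several error terms in Section~5 are controlled only in the iterated form $\lim_{\epsilon\to0}\limsup_{n\to\infty}$ (the truncation and discretization-of-$\sigma$ steps), which does not directly yield an $O_{\P}(n^{-\gamma})$ bound for the functional, hence none for $R(g)^{n,2}_T(Y)$ or $\widehat H_n$ through \eqref{eq:Hn}. So as written your proof of the $\widehat C^n_T$ claim has the same hole as the paper's --- you have merely named it. To close it one must either establish a genuine rate for $\widehat H_n$ (plausible, but requiring a quantitative reworking of the whole proof of Theorem~\ref{thm:main}) or restructure the estimator so that the exponent $\widehat H_n$ never multiplies $\log n$. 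The remaining ingredient, $\widehat\eta(g)\to\eta(g)$ by continuity of $H\mapsto\eta(g)$, is fine.
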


\begin{proof} The convergence in \eqref{eq:C} is a direct consequence of Theorem~\ref{thm:main}, Theorem~\ref{thm:H} and the fact that $\mu_F(v_1,v_2) = v_1+v_2 - (2v_2)/2=v_1$. For \eqref{eq:Pi}, 
	since $Y_{t} = X_{t} + \rho_t Z_{t}$, we may write the difference as
	$\Delta^n_i Y = \Delta^n_i X + \Delta^n_i \rho Z$. Squaring, we see that
	\begin{equation} \label{rho-estim}
		\wh\Pi^n_T = \frac{1}{2\lfloor nT\rfloor}\sum_{i=1}^{\lfloor nT \rfloor} \big((\Delta^n_i X)^2 +\Delta^n_i X \Delta^n_i \rho Z+(\Delta^n_i \rho Z)^2\big).
	\end{equation}
Since $\E[\lvert\Delta^n_i X\rvert^p]^{1/p}\lec\Den^H$ (see Lemma~\ref{lem:fBM-bound} below), the first two terms are negligible and we only need to determine the limit of 
\[\frac{1}{2\lfloor nT\rfloor}\sum_{i=1}^{\lfloor nT \rfloor} (\Delta^n_i \rho Z)^2.\]
Conditionally on $\calf$, $(\Delta^n_i \rho Z)_{i\in\N}$ is a $1$-dependent sequence (i.e., $\Delta^n_i \rho Z$ and $\Delta^n_j \rho Z$ are $\calf$-conditionally independent for any $\lvert i-j\rvert>1$). Thus, we obtain \eqref{eq:Pi} by an application of the weak law of large numbers.
\end{proof}

\begin{remark}\label{rem:opt}
Our estimator of $H$ depends on a choice of $\kappa\in [\frac{2H}{2H+1},1]$. Although we cannot prove it in this article, we strongly conjecture that choosing $\kappa_\textrm{opt}=\frac{2H}{2H+1}$ yields a rate-optimal estimator of $H$. But taking $k_n= n^{\kappa_\textrm{opt}}/\theta$ directly is impossible because we do not know $H$. To solve this problem and choose a sequence that is asymptotically of order $n^{\kappa_\textrm{opt}}$, we could, in principle, follow the ideas outlined in \cite{CHLRS22a, CHLRS22b, Szymanski22}. But since optimality is not the main focus of this work, we do not present the details here. Instead, in the simulation and empirical sections below, we follow a more \emph{ad hoc} approach, which yields sufficiently good estimates in our application.
\end{remark}

\section{Monte--Carlo simulations}\label{sec:MC}

In this section, we document the finite-sample performance of our estimators from Section~\ref{sec:main} based on 5{,}000 simulated paths from the model
\begin{equation}\label{eq:model} 
	Y_t = \si B^H_t + \rho Z_t, \quad t\in[0,1],
\end{equation} 
for each of the values
\[ H\in \{ 0.1, 0.2, 0.3, \tfrac13, 0.4, 0.5, 0.6, 0.7, 0.8, 0.9\}. \]
In \eqref{eq:model}, $B^H$ is a standard fractional Brownian motion with Hurst index $H$, $Z_t$ is  standard Gaussian white noise and $\si=1$ and $\rho=0.1$. As a sampling frequency, we choose $n=10{,}000$. Furthermore, we take  $g(x) = 2(x \wedge (1 - x))$ 
as  the weight function for pre-averaging. In order to obtain a window size $k_n$ that is as close as possible to the optimal rate $n^{2H/(2H+1)}$, we use an adaptive approach by first taking $k_n^{(0)}=n^{2/3}$, which corresponds to an admissible 
	$\kappa$ for any value of $H\in(0,1)$. We then obtain an estimator $\wh H_n^{(0)}$ of $H$ using the formula \eqref{eq:Hn} and plug in this value to get a new window size $k_n^{(1)}=n^{2 \wh H_n^{(0)}/(2\wh H_n^{(0)}+1)}$. We then update our estimator of $H$ to $\wh H_n^{(1)}$ and subsequently compute a new window size $k_n^{(2)}$. This procedure is repeated until $H$ no longer changes by more than a fixed threshold of $0.025$ or 100 iterations have been computed, whichever happens first. (If an estimate of $H$ is negative at some point in this procedure, we continue the iteration with 
	$\kappa=0$.) With the final estimate of $H$, we then compute estimates of $\si^2$ and $\rho^2$ via \eqref{eq:Pi} and \eqref{eq:C}. Table~\ref{tab:1} lists the bias, standard error (SE) and root mean square error (RMSE) of the resulting estimators. As we can see, they work very  well unless $H$ is close to $0$ or $1$. In particular, their precision is very satisfactory for $H=\frac13$, which is the relevant value for applications in turbulence.
	
	\begin{table}[htbp]
		\begin{center}
			\begin{tabular}{c | c c c | c c c | c c c}
			 $H$ & ~&$\wh H_n$&~&~&$\wh C^n_T$&~&~&$\wh \Pi^n_T$&~\\
			 ~&Bias & SE & RMSE &Bias & SE & RMSE &Bias & SE & RMSE  \\
			 \hline
		0.1&	 0.163&	0.006&	0.163&	473.68\%&	2.61\%	&473.69\%	&-56.67\%&	0.85\%&	56.68\%
\\
		0.2&	 -0.004	&0.011&	0.011&	44.89\%	&0.67\%	&44.89\%&	-3.42\%	&2.76\%	&4.39\%
\\
		0.3&	 0.006&	0.018&	0.019&	4.25\%&	0.50\%&	4.28\%&	-1.31\%	&2.71\%	&3.01\%
\\
		\bff{1/3}&	\bff{0.001}&	\bff{0.019}	&\bff{0.019}&	\bff{1.94\%}&	\bff{0.48\%}&	\bff{2.00\%}&	\bff{-0.40\%}	&\bff{3.06\%}&	\bff{3.09\%}
\\
		0.4&	 -0.002&	0.025&	0.025&	0.41\%&	0.48\%	&0.63\%	&-0.03\%	&3.98\%	&3.98\%
\\
			0.5& -0.002	&0.033&	0.033&	0.04\%&	0.47\%	&0.47\%&	0.10\%	&6.11\%	&6.11\%
\\
0.6&			 -0.006&	0.043&	0.043&	0.01\%&	0.48\%&	0.48\%&	-0.63\%&	9.15\%&	9.17\%
\\
	0.7&		 -0.008&	0.052&	0.053&	0.00\%&	0.47\%&	0.47\%&	-0.52\%	&14.27\%&	14.28\%\\
		0.8&	 -0.017	&0.064&	0.066&	0.00\%&	0.48\%&	0.48\%&	-1.28\%	&27.10\%	&27.13\%
\\
			0.9& -0.028	&0.076	&0.081	&0.01\%&	0.47\%	&0.47\%&	-2.89\%	&58.99\%&	59.06\%\\
			 \hline
%
			
			\end{tabular}
					\caption{Monte--Carlo results based on $5{,}000$   replications. For $\wh C^n_T$ and $\wh \Pi^n_t$, bias, SE and RMSE are stated in percentage of the true values. The row $H=\frac13$, highlighted in bold font, is the case of interest in the turbulence application.} 
			\label{tab:1}
		\end{center}
	\end{table}

\section{Application to turbulence data}\label{sec:data}

To exemplify the performance of our estimators, we apply them to a dataset of 10 Hz temperature measurements collected at the Concordia Station (Dome C, Antarctica) on January 9, 2015, and obtained  from  the IPEV/PNRA Project ``Routin Meteorological Observation at Station Concordia'' (see \url{http://www.climantartide.it}). We also refer to \cite{CLASG20} for a more detailed description of the data. Under the hypothesis that temperature measurements satisfy Kolmogorov's 2/3-law (or, in the spectral domain, Kolmogorov's $-5/3$-law), we should see a roughness parameter $H=\frac13$ for a whole range of frequencies in the so-called inertial range. Thus, for each frequency
\[ \Delta_n \in \{ 1\,\mathrm{s}, \dots, 30\,\mathrm{s} \} \]
 we split our dataset into $\Delta_n/0.1\,\mathrm{s}$ separate time series with increments of length $\Delta_n$ and compute, for each hour of January 9, 2015, estimates of $H$, volatility and noise volatility by averaging over the estimates obtained for each of the $\Delta_n/0.1\,\mathrm{s}$  time series. Figure~\ref{fig:2} shows the resulting estimates of $H$, $C_T$ and $\Pi_T$, obtained by averaging (for $H$) and summing up (for $C_T$ and $\Pi_T$) the hourly estimates, as a function of $\Delta_n$. In particular, we find that after removing the noise, the estimated roughness parameter of the data is in very good agreement with the theoretically predicted value of $H=\frac13$. On the other hand, if we do not perform pre-averaging (i.e., we choose $\kappa=0$), the estimates of $H$ are practically $0$ for all frequencies, indicating the presence of measurement noise in the data.

 \begin{figure}[h!]
 	\centering
 	\includegraphics[width=0.49\linewidth]{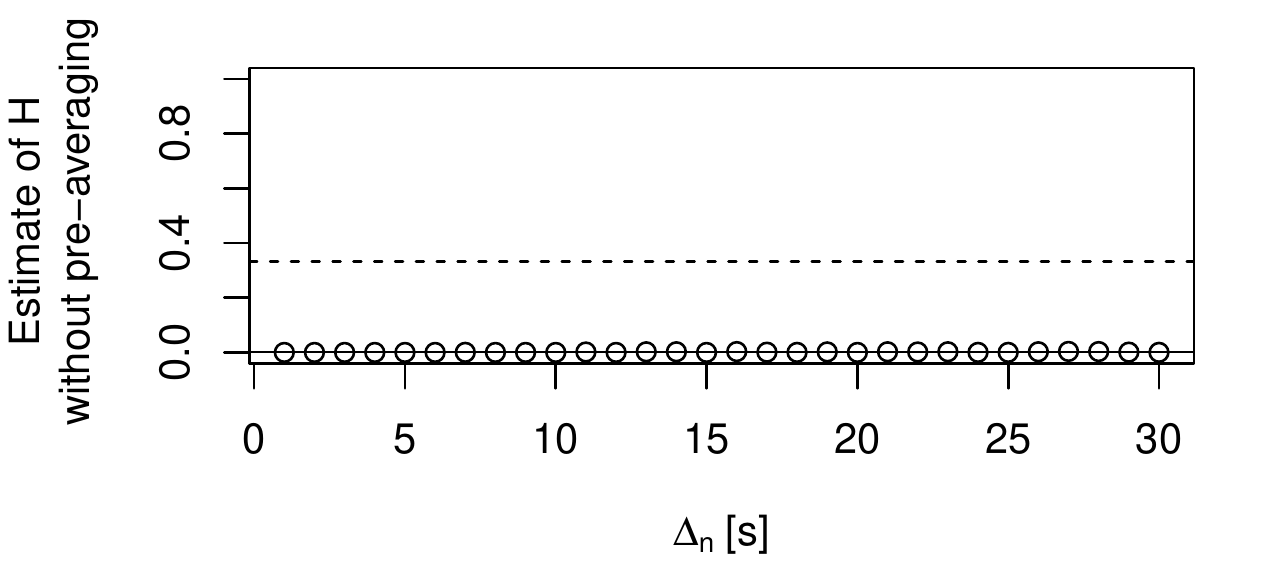}
 	 	\includegraphics[width=0.49\linewidth]{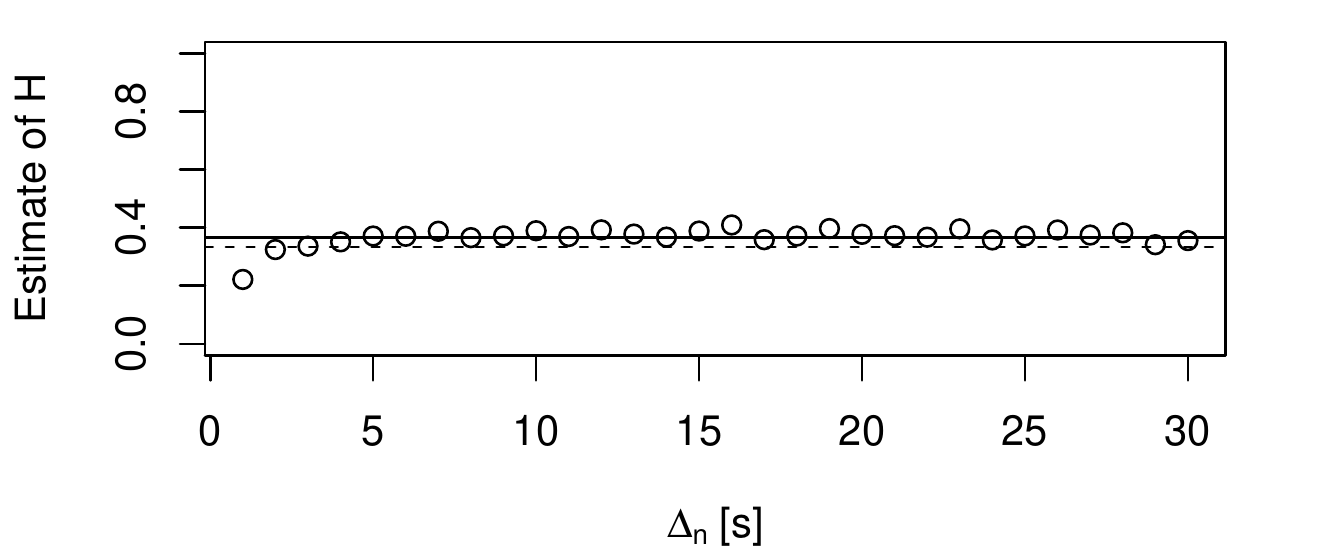}
 	 	\includegraphics[width=0.49\linewidth]{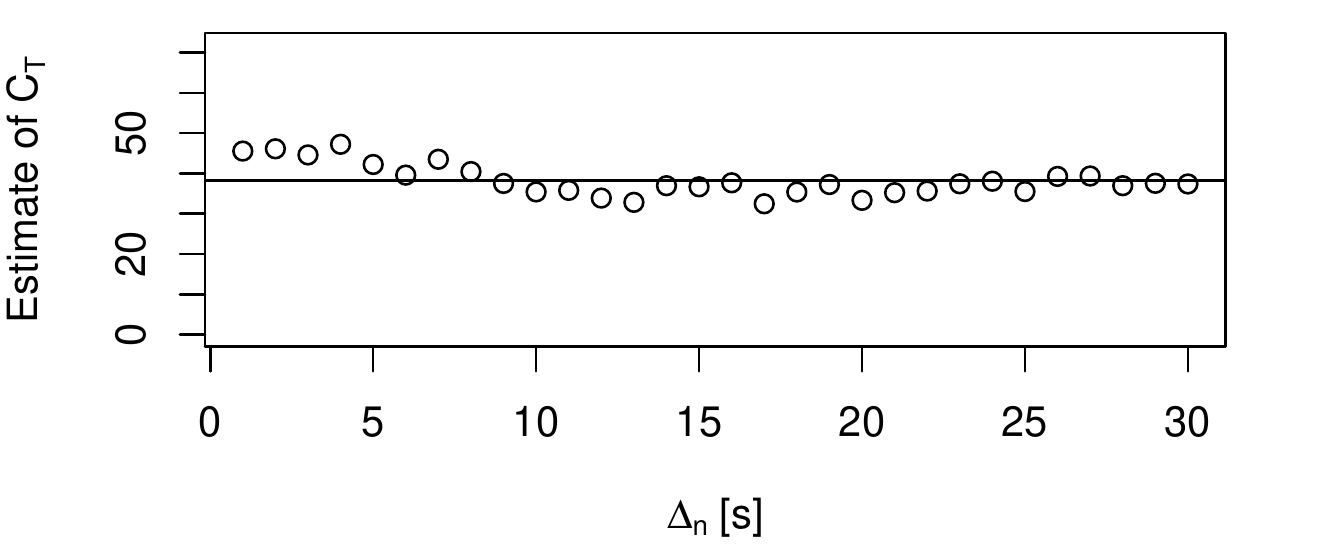}
 	 	\includegraphics[width=0.49\linewidth]{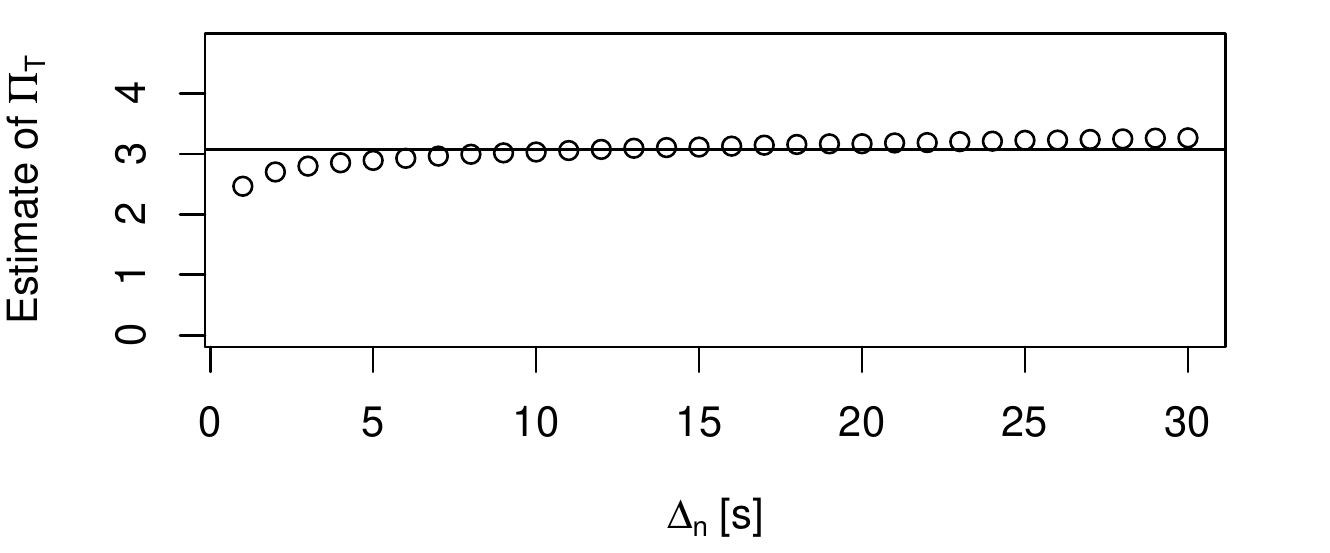}
 	 	\caption{Estimates of $H$ without pre-averaging and estimates of $H$, $C_T$ and $\Pi_T$ with pre-averaging for different sampling frequencies $\Delta_n$. The solid lines indicate the mean over all frequencies. In the first and second plot, the dashed line corresponds to the hypothesized value of $H=\frac13$.}
 	 	\label{fig:2}
 \end{figure}

\section{Proof of Theorem~\ref{thm:main}}\label{sec:proof}

Throughout this section, all conditions in Theorem~\ref{thm:main} are tacitly assumed. Also, by a standard localization argument (see e.g., \cite[Lemma 3.4.5]{JP12}), we can additionally make the following assumptions:
\begin{itemize}
\item The processes $\si$, $a$ and $\rho$ are uniformly bounded by a fixed constant. In particular, they possess uniformly bounded moments of all orders.
\item The processes $\si$, $a$ and $\rho$  are $L^p$-continuous for any $p>0$.
\end{itemize}
These two conditions will also be used without mentioning in what follows. We further write $A\lec B$ if there is $C\in(0,\infty)$, independent of anything important, such that $A\leq CB$.  Notionally, if we say that a random variable has some size \(f(n)\), then all \(L^p\)-norms are bounded by a constant depending on $p$ times \(f(n)\).

In the sequel, we will make repeated use of the following $L^p$-estimates of the increments of $X^H$.
\begin{lemma}\label{lem:fBM-bound}
Recall that \(X^H_t = K_H^{-1}\int_0^t(t-s)^{H -1/2}\sigma_sdB_s\).  For any $p>0$,
 \begin{equation}\label{eq:fBM-bound}
\lpnorm{\Delta^n_i X^H}{p} \lec n^{-H}, 
\end{equation}
with a constant that does not depend on $n$ or $i$.
\end{lemma}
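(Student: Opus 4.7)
The plan is to decompose the increment into the ``new'' piece, over $((i-1)/n, i/n]$, and the ``memory'' piece, over $[0, (i-1)/n]$, namely $K_H \Delta^n_i X^H = I_1 + I_2$ with
\[ I_2 = \int_{(i-1)/n}^{i/n}(i/n-s)^{H-1/2}\sigma_s\,dB_s, \quad I_1 = \int_0^{(i-1)/n}\bigl[(i/n-s)^{H-1/2} - ((i-1)/n-s)^{H-1/2}\bigr]\sigma_s\,dB_s. \]
I would then apply the Burkholder--Davis--Gundy inequality to each stochastic integral, so that the $L^p$-norm is bounded (up to a $p$-dependent constant) by the $L^p$-norm of the square root of the quadratic variation. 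Since the localization step guarantees that $\sigma$ is uniformly bounded by a deterministic constant, the problem reduces to bounding two deterministic integrals.

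For $I_2$, the relevant integral is $\int_{(i-1)/n}^{i/n}(i/n-s)^{2H-1}\,ds = n^{-2H}/(2H)$, giving $\|I_2\|_p \lec n^{-H}$ immediately. For $I_1$, a change of variables $u = (i-1)/n - s$ followed by the rescaling $v = nu$ converts the integral into
\[ \int_0^{(i-1)/n}\bigl[(i/n-s)^{H-1/2} - ((i-1)/n-s)^{H-1/2}\bigr]^2\,ds = n^{-2H}\int_0^{i-1}\bigl[(1+v)^{H-1/2} - v^{H-1/2}\bigr]^2\,dv. \]
Hence it suffices to show that $J_H := \int_0^\infty [(1+v)^{H-1/2} - v^{H-1/2}]^2\,dv$ is finite, after which $\|I_1\|_p \lec n^{-H}\sqrt{J_H}$ uniformly in $i$.

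The main obstacle is verifying finiteness of $J_H$ across the full range $H \in (0,1)$, since both endpoints require care. Near $v=0$, for $H \geq 1/2$ the bracket is bounded, but for $H < 1/2$ the two kernels individually blow up; in this rough regime one writes the bracket as $-v^{H-1/2}$ plus a bounded term, whence the squared integrand is $O(v^{2H-1})$, integrable precisely because $H > 0$. As $v \to \infty$, the mean value theorem (or Taylor expansion) gives $(1+v)^{H-1/2} - v^{H-1/2} = O(v^{H-3/2})$, so the squared integrand decays like $v^{2H-3}$, which is integrable because $H < 1$. Combining $\|I_1\|_p, \|I_2\|_p \lec n^{-H}$ and absorbing the deterministic prefactor $K_H^{-1}$ yields the claim.
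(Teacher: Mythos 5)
Your proof is correct and follows essentially the same route as the paper: apply BDG, use the boundedness of $\sigma$ from localization to reduce to a deterministic kernel integral, rescale to extract the factor $n^{-H}$, and verify that the limiting integral $\int_0^\infty[(1+v)^{H-1/2}-v^{H-1/2}]^2\,dv$ is finite (your split into $I_1+I_2$ is just the paper's single kernel $h(\tfrac in -s)-h(\tfrac{i-1}n-s)$ with $h(t)=K_H^{-1}t_+^{H-1/2}$ written out over the two regions, and your endpoint analysis supplies the finiteness detail the paper only asserts).
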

\begin{proof} 
For simplicity, we write \(X^H_t =   \int_0^t h(t-s)\sigma_sdB_s \) where $h(t)=K_H^{-1}t^{H-1/2}_+$. Then by the Burkholder--Davis--Gundy (BDG) inequality,
    \begin{equation*}
\lpnorm{\Delta^n_i X^H}{p}  \lec  \E\bigg[\bigg|\int^{\frac{i}{n}}_0 \big(h(\tfrac{i}{n}-s)-h(\tfrac{i-1}{n}-s)\big)^2\sigma_s^2 ds\bigg|^{\frac{p}{2}}\bigg]^{\frac{1}{p}}.
    \end{equation*}
If $p\geq2$, we apply the Minkowski integral inequality to bound the previous line by
    \begin{equation*}
 \bigg(\int_0^{\frac i n} \big(h(\tfrac{i}{n}-s)-h(\tfrac{i-1}{n}-s)\big)^2 \lpnorm{\sigma^2_s}{\frac{p}{2}} ds\bigg)^{\frac{1}{2}}\lec  \bigg(\int_0^{\frac i n} \big(h(\tfrac{i}{n}-s)-h(\tfrac{i-1}{n}-s)\big)^2   ds\bigg)^{\frac{1}{2}}
    \end{equation*}
    since \(\sigma\) has uniformly bounded moments. Thus,
    \begin{equation*} 
  \lpnorm{\Delta^n_i X^H}{p}\lec \bigg(\int_0^{\frac i n} \big(h(s)-h(s-\tfrac1n)\big)^2   ds\bigg)^{\frac{1}{2}}\leq n^{-H}\bigg(\int_0^\infty \big(h( s)-h( s-1)\big)^2   ds\bigg)^{\frac{1}{2}},
    \end{equation*}
which shows \eqref{eq:fBM-bound} for $p\geq2$ since the last integral is finite.
 if \(p < 2\), we simply apply  Jensen's inequality.
\end{proof}

Next, we  compute $L^p$-bounds on an averaged increment of $X^H$. The result  motivates the choice of normalization in  the definition of \(V(g)_T^{n,f}(Y)\).
\begin{prop}\label{prop:Lp}
  For every $p\geq2$, we have
  \begin{equation}\label{eq:Lp}
    \lpnorm{\overline{X^H}(g)^n_i}{p} \lec (  {k_n}/{n}  )^H,
  \end{equation}
with a constant that does not depend on $n$ or $i$.
\end{prop}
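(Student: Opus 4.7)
The plan is to first rewrite \(\overline{X^H}(g)^n_i\) as a weighted sum of \emph{values} of \(X^H\) (via Abel summation), then exploit the boundary conditions on \(g\) to turn the weights into coefficients summing to zero, which permits passing to weighted increments measured from a common base point. A Minkowski-type bound together with an extension of Lemma~\ref{lem:fBM-bound} to arbitrary increments will then close the argument.

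First, since \(g(0)=g(1)=0\) implies \(g^n_0=g^n_{k_n}=0\), I would apply summation by parts to obtain
\[ \overline{X^H}(g)^n_i = \sum_{j=1}^{k_n-1} g^n_j\Delta^n_{i+j-1}X^H = -\sum_{j=1}^{k_n}(\Delta g^n_j)X^H_{(i+j-2)/n}. \]
Because \(\sum_{j=1}^{k_n}\Delta g^n_j = g^n_{k_n}-g^n_0 = 0\), I can then subtract \(X^H_{(i-1)/n}\sum_{j}\Delta g^n_j=0\) from the right-hand side to get
\[ \overline{X^H}(g)^n_i = -\sum_{j=1}^{k_n}(\Delta g^n_j)\bigl(X^H_{(i+j-2)/n}-X^H_{(i-1)/n}\bigr). \]
This is the crucial reformulation: each summand now contains a genuine increment of \(X^H\) of length \((j-1)/n\), with the \(j=1\) term vanishing automatically.

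Next, by Minkowski's inequality,
\[ \lpnorm{\overline{X^H}(g)^n_i}{p} \leq \sum_{j=1}^{k_n} |\Delta g^n_j|\cdot\lpnorm{X^H_{(i+j-2)/n}-X^H_{(i-1)/n}}{p}. \]
Since \(g\in C^2\) (in particular Lipschitz), \(|\Delta g^n_j|\lec 1/k_n\). For the norm of the increment, I would extend Lemma~\ref{lem:fBM-bound} to non-adjacent pairs, establishing \(\lpnorm{X^H_t-X^H_s}{p}\lec |t-s|^H\) for every \(0\le s\le t\). This is a direct repetition of the proof of Lemma~\ref{lem:fBM-bound}: apply BDG to the decomposition \(X^H_t-X^H_s=\int_0^s(h(t-u)-h(s-u))\sigma_u\,dB_u+\int_s^t h(t-u)\sigma_u\,dB_u\), invoke the Minkowski integral inequality on each piece, and finally rescale by \(v=(t-s)w\) in the resulting deterministic integrals to extract the factor \((t-s)^{2H}\).

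Combining the two bounds gives
\[ \lpnorm{\overline{X^H}(g)^n_i}{p} \lec \sum_{j=1}^{k_n} \frac{1}{k_n}\Bigl(\frac{j-1}{n}\Bigr)^H \lec \frac{k_n^{H+1}}{k_n\, n^H} = (k_n/n)^H, \]
as desired. The main technical point (and essentially the only obstacle) is the extension of Lemma~\ref{lem:fBM-bound} to arbitrary increments; everything else is Abel summation combined with Minkowski. It is worth remarking that this Minkowski step, which naively discards cancellations between different \(j\)'s, is in fact not wasteful here: the bound \(|\Delta g^n_j|=O(1/k_n)\) combined with the elementary estimate \(\sum_{j=1}^{k_n} j^H \lec k_n^{H+1}\) reproduces exactly the scaling \((k_n/n)^H\).
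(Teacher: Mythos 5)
Your proof is correct and follows essentially the same route as the paper's: summation by parts (exploiting $g(0)=g(1)=0$) to rewrite $\overline{X^H}(g)^n_i$ as a sum of $O(k_n)$ long increments weighted by $\Delta g^n_j = O(1/k_n)$, combined with the extension of Lemma~\ref{lem:fBM-bound} to increments of arbitrary length $|t-s|$. The only cosmetic differences are that the paper anchors the increments at the right endpoint $X^H_{(i+k_n-2)/n}$ rather than at $X^H_{(i-1)/n}$, and closes the estimate with the Cauchy--Schwarz/H\"older bound $k_n^{p-1}\sum_j|\cdot|^p$ instead of the $L^p$-triangle inequality; both give the same $(k_n/n)^H$ scaling.
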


\begin{proof}
  Recall the notation
\eqref{eq:prev-W} for a general process $W$ as well as $
    \Delta g^n_j = g^n_j-g^n_{j-1}
$,
 with $ g^n_{-1} = 0$. To get an idea of the proof, we consider
 the special case when \(p=2\) first. We can apply summation by parts and obtain
  \begin{equation*}\begin{split}
    \E\Big[ \Big(\overline{X^H}(g)^n_i\Big)^2 \Big]
    &
      = \E\bigg[ \bigg( \sum_{j=1}^{k_n-1} g^n_j \Delta_{i+j-1}^nX^H \bigg)^2 \bigg]  =\E\bigg[ \bigg( \sum_{j=1}^{k_n-1} \sum_{l=0}^{j} 
                         \Delta g^n_l\Delta_{i+j-1}^nX^H \bigg)^2  \bigg]\\
                         &=\E\bigg[ \bigg( \sum_{l=0}^{k_n-1} \Delta g^n_l \sum_{j=l}^{k_n-1} \Delta_{i+j-1}^nX^H 
\bigg)^2  \bigg].
    \end{split}
\end{equation*}
By   the Cauchy--Schwarz inequality,
\begin{align*} 
    \E\bigg[ \bigg( \sum_{l=0}^{k_n-1} \Delta g^n_l \Big(X^H_{\frac{i+k_n-2}{n}} - X^H_{\frac{i+l-2}{n}}\Big)\bigg)^2  \bigg] &\leq 2\bigg(\sum_{l=0}^{k_n-1}1\bigg) \bigg( \sum_{l=0}^{k_n-1} \E\bigg[  ( \Delta g^n_l  )^2 \Big(X^H_{\frac{i+k_n-2}{n}} - X^H_{\frac{i+l-2}{n}}\Big)^2 \bigg]\bigg)\\
      &= 2 k_n \sum_{l=0}^{k_n-1}  ( \Delta g^n_l  )^2 \E\bigg[  \Big(X^H_{\frac{i+k_n-2}{n}} - X^H_{\frac{i+l-2}{n}}\Big)^2 \bigg]\\
& \lec k_n \sum_{l=0}^{k_n-1} \frac{1}{k_n^2}  (g'(\xi^n_l) )^2  \bigg(\frac{k_n-l}{n}\bigg)^{2H}, 
    \end{align*}
where the last inequality follows from the mean-value theorem and  Lemma~\ref{lem:fBM-bound}. 
Recall that \(g'\) is bounded, so this is bounded as
    \begin{equation*}
\E\bigg[ \bigg( \sum_{l=0}^{k_n-1} \Delta g^n_l \Big(X^H_{\frac{i+k_n-2}{n}} - X^H_{\frac{i+l-2}{n}}\Big)\bigg)^2  \bigg] \lec \frac{1}{k_n}\sum_{l=0}^{k_n-1}\left(\frac{k_n-l}{n}\right)^{2H} \leq  ( {k_n}/{n} )^{2H}, 
   \end{equation*}
which is \eqref{eq:Lp} for $p=2$.

  Then we can extend this to $L^p$, with $p \geq2$ by using H\"older's inequality and Lemma~\ref{lem:fBM-bound} to obtain
  \begin{align*} 
 \E\Big[ \Big \lvert\overline{X^H}(g)^n_i\Big \rvert^p \Big] &\lec \E\bigg[ \bigg\lvert \sum_{l=0}^{k_n-1} \Delta g^n_l \left(X^H_{\frac{i+k_n-2}{n}} - X^H_{\frac{i+l-2}{n}}\right)\bigg\rvert^p  \bigg] \\
     & \leq k_n^{p-1}   \sum_{l=0}^{k_n-1} \lvert \Delta g^n_l \rvert^p \E\bigg[\Big|X^H_{\frac{i+k_n-2}{n}} - X^H_{\frac{i+l-2}{n}}\Big|^p \bigg] \\
     &\lec   k_n^{p-1} \sum_{l=0}^{k_n-1}\bigg\lvert\frac{g'(\xi^n_l)}{k_n}\bigg\rvert^p\left(\frac{k_n-l}{n}\right)^{pH}\\
& \lec  ( {k_n}/{n} )^{pH}.\qedhere
    \end{align*}
\end{proof}

 The next goal is to remove the drift $A$, that is, we show that we can safely work with only the fractional component $X^H_t$ of \(X_t\).
 
\begin{lemma}
Letting
  \begin{equation}
    U_t = Y_t - X_0 - A_0 = X_t^H + \rho_t Z_t,
  \end{equation}
we have that 
  \begin{equation*}
    \lim_{n \rightarrow \infty} \lpnorm{ V(g)^{n,f}_T(Y) - V(g)^{n,f}_T(U)}{p} = 0.
  \end{equation*}
\end{lemma}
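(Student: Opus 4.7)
The plan is to exploit that the difference $Y_t - U_t = X_0 + A_t$ is Lipschitz in $t$ with bounded derivative $a$, so that $\Delta^n_i Y - \Delta^n_i U = \Delta^n_i A$ (the constant $X_0$ drops out) and the crude deterministic bound $|\Delta^n_i A| \lec 1/n$ holds. This gives every drift-induced term a large quantitative gain relative to $\Delta^n_i X^H$ and to the noise increments $\Delta^n_i(\rho Z)$, and the whole argument reduces to keeping track of these gains against the normalizations $(k_n/n)^H$ and $(k_n/n)^{2H}$.

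First I would expand
\begin{equation*}
V(g)^{n,f}_T(Y) - V(g)^{n,f}_T(U) = \frac{1}{n}\sum_{i=1}^{\lfloor nT\rfloor}\bigl[f(\xi_i^Y,\eta_i^Y) - f(\xi_i^U,\eta_i^U)\bigr],
\end{equation*}
where $\xi_i^W = \overline{W}(g)^n_i/(k_n/n)^H$ and $\eta_i^W = \widehat{W}(g)^n_i/(k_n/n)^{2H}$ for $W \in \{Y,U\}$. By first-order Taylor and the polynomial growth of $\partial_1 f$, $\partial_2 f$, I would bound
\begin{equation*}
\bigl|f(\xi_i^Y,\eta_i^Y) - f(\xi_i^U,\eta_i^U)\bigr| \lec \bigl(1+|\xi_i^Y|+|\xi_i^U|+|\eta_i^Y|+|\eta_i^U|\bigr)^q\bigl(|\xi_i^Y-\xi_i^U|+|\eta_i^Y-\eta_i^U|\bigr)
\end{equation*}
for some $q \geq 0$. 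After Hölder, the task reduces to (i) uniform (in $i, n$) $L^p$-bounds on the four normalized quantities and (ii) uniform $L^p$-smallness of the two differences.

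The uniform bounds in (i) follow by combining Proposition~\ref{prop:Lp} (for the $X^H$ component of $\overline{W}$), a BDG/Rosenthal estimate for the $\calf$-conditionally independent noise component (after Abel summation, using $g(0)=g(1)=0$), and the trivial bound $|\bar{A}(g)^n_i| \lec k_n/n$ for the drift, together with the analogous estimates for $\widehat{W}$. For (ii), the first-argument difference is deterministic: $\xi_i^Y - \xi_i^U = (n/k_n)^H\,\overline{A}(g)^n_i$ with $|\overline{A}(g)^n_i|\lec k_n/n$, so $|\xi_i^Y-\xi_i^U|\lec (k_n/n)^{1-H}\to 0$ since $\kappa<1$ and $H<1$. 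For the second, I would expand
\begin{equation*}
\widehat{Y}(g)^n_i - \widehat{U}(g)^n_i = \sum_{j=1}^{k_n}(\Delta g^n_j)^2\bigl(2\Delta^n_{i+j-1}U\cdot\Delta^n_{i+j-1}A + (\Delta^n_{i+j-1}A)^2\bigr),
\end{equation*}
and control each summand using $|\Delta g^n_j|\lec 1/k_n$, $|\Delta^n_{i+j-1}A|\lec 1/n$, and $\lpnorm{\Delta^n_{i+j-1}U}{p}\lec 1$ (from Lemma~\ref{lem:fBM-bound} and the moment bounds on $\rho, Z$). This yields $\lpnorm{\widehat{Y}(g)^n_i-\widehat{U}(g)^n_i}{p/2}\lec 1/(k_n n)$ and hence $\lpnorm{\eta_i^Y - \eta_i^U}{p/2}\lec n^{2H-1}/k_n^{2H+1}$.

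The main obstacle is precisely this second-argument estimate: with $k_n\sim n^\kappa/\theta$, the exponent $2H-1-\kappa(2H+1)$ is at most $-1$ exactly when $\kappa\geq 2H/(2H+1)$, so the normalization $(k_n/n)^{-2H}$ is at the edge of admissibility at the lower end of the allowed range of $\kappa$. The mechanism is a delicate cancellation: the noise increments $\Delta^n_i U$ of order one are tamed by pairing with the $1/n$ drift increments and with the $1/k_n^2$ weight $(\Delta g^n_j)^2$. Once these pieces are combined with the uniform moment bounds from step (i) via Hölder's inequality (using $p\geq 2$, and Jensen for $p<2$), summation over $i$ and division by $n$ yields $\lpnorm{V(g)^{n,f}_T(Y) - V(g)^{n,f}_T(U)}{p}\to 0$, which is the claim.
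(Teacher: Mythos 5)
Your proposal is correct and follows essentially the same route as the paper's proof: a mean-value/Taylor expansion of $f$, Hölder's inequality combined with uniform moment bounds on the normalized arguments, the bound $\lvert\overline{A}(g)^n_i\rvert\lec k_n/n$ for the first argument, and the expansion $(\Delta^n_i Y)^2-(\Delta^n_i U)^2=(\Delta^n_i A)^2+2\Delta^n_i A\,\Delta^n_i U$ leading to the rate $n^{2H-1}/k_n^{2H+1}$ for the second. One small quibble: this step is not actually ``at the edge of admissibility,'' since convergence here only needs $\kappa>\frac{2H-1}{2H+1}$, which is strictly weaker than the assumed $\kappa\geq\frac{2H}{2H+1}$ --- a point the paper itself makes right after this lemma.
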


\begin{proof}
  Recalling the definition of the variation, the difference is really
  \begin{equation*}
    \lpnorm{\frac{1}{n}\sum_{i=1}^{ \lfloor nT  \rfloor} \bigg[ f\bigg( \frac{\overline{Y}(g)^n_i}{ ( k_n/n  )^H}, \frac{\widehat{Y}(g)^n_i}{ ( k_n/n  )^{2H}} \bigg) - f\bigg( \frac{\overline{U}(g)^n_i}{ ( k_n/n  )^H}, \frac{\widehat{U}(g)^n_i}{ ( k_n/n  )^{2H}} \bigg) \bigg] }{p}.
  \end{equation*}
  Applying the mean-value theorem to the difference,  it becomes
  \begin{equation*}
    \frac{1}{n}\sum_{i=1}^{\left\lfloor nT \right\rfloor} \begin{pmatrix} \partial_1 f(\xi^n_{1,i}) \\ \partial_2 f(\xi^n_{2,i}) \end{pmatrix}^T \begin{pmatrix} \frac{\overline{Y}(g)^n_i - \overline{U}(g)^n_i}{(k_n/n)^H} \\ \frac{\widehat{Y}(g)^n_i - \widehat{U}(g)^n_i}{(k_n/n)^{2H}} \end{pmatrix}
  \end{equation*}
for some intermediate $\xi^n_{1,i}$ and $\xi^n_{2,i}$. Proposition~\ref{prop:Lp} and Lemma~\ref{lem:size-U} below show that $\xi^n_{1,i}$ and $\xi^n_{2,i}$ have uniformly bounded moments in $n$ and $i$ of all orders $p$. Since $\partial_1 f$ and $\partial_2 f$ are of polynomial growth, the same is true for $\partial_1 f(\xi^n_{1,i})$ and $\partial_2 f(\xi^n_{2,i})$. Thus, by an application of H\"older's inequality, it suffices to show that the two differences in the previous display converge to $0$ in $L^p$ for any $p>0$.

  Since \(\lVert{\Delta_{i+j-1}^n A}\rVert_{L^1} \lec n^{-1}\) by the boundedness of $a$, we have \(\lVert{\overline{A}(g)^n_i}\rVert_{L^1} \lec   {k_n}/{n}\). Thus,
  \begin{equation*}
    \lpnorm{\frac{\overline{Y}(g)^n_i - \overline{U}(g)^n_i}{(k_n/n)^H}}{p} = \lpnorm{\frac{\overline{A}(g)^n_i}{(k_n/n)^H} }{p} \lec  ( {k_n}/{n} )^{1 - H},
  \end{equation*}
  which tends to $0$ for any   \(\kappa \in(0, 1)\). For the other difference, we first bound
  \begin{align*}
    \lpnorm{\frac{\widehat{Y}(g)^n_i - \widehat{U}(g)^n_i}{(k_n/n)^{2H}}}{p} &=  \lpnorm{\frac{n^{2H}}{k_n^{2H}}\sum_{j=1}^{k_n}(\Delta g^n_i)^2\left( (\Delta^n_{i+j-1}Y)^2 - (\Delta^n_{i+j-1}U)^2 \right)}{p} \\
                                                                 &\leq \frac{n^{2H}}{k_n^{2H}}\sum_{j=1}^{k_n}(\Delta g^n_i)^2\left( \lpnorm{(\Delta^n_{i+j-1}A)^2}{p} + 2\lpnorm{\Delta^n_{i+j-1}A\Delta^n_{i+j-1}U}{p} \right).                                          
\end{align*}
Clearly, \(\lVert(\Delta_{i+j-1}^n A)^2\rVert_{L^1} \lec n^{-2}\). Since $Z$ has uniformly bounded moments of all orders and $\rho$ is bounded, we can use the H\"older's inequality  to get
\begin{align*}
  \lpnorm{\Delta^n_{i+j-1}A\Delta^n_{i+j-1}U}{p} &
  \leq \lpnorm{\Delta^n_{i+j-1}A}{2p}\left(\lpnorm{\Delta^n_{i+j-1}X^H}{2p}+\lpnorm{\Delta^n_{i+j-1}\rho Z}{2p}\right)\\
                                           &\lec n^{-1}\left(  ( {k_n}/{n} )^H + C \right) \lec n^{-1}.
\end{align*}
Using the standard bound $\lvert \Delta g^n_i\rvert \lec k_n^{-1}$, we conclude that 
\[
  \lpnorm{\frac{\widehat{Y}(g)^n_i - \widehat{U}(g)^n_i}{(k_n/n)^{2H}}}{p} \lec \frac{n^{2H}}{k_n^{2H}}\sum_{j=1}^{k_n} \left(\frac{g'\left( \xi^n_i \right)}{k_n}\right)^2n^{-1} \leq \frac{n^{2H-1}}{k_n^{2H+1}}\]
  which, since \(\kappa \geq \frac{2H}{2H + 1} > \frac{2H-1}{2H+1}\), converges to $0$ as \(n \rightarrow \infty\). 
\end{proof}

Note that the lower bound on \(\kappa\) in Theorem~\ref{thm:main} is not the most parsimonious choice possible just examining the previous proof;   the next step we take justifies a lower bound of \(\frac{2H}{2H+1}\) rather than \(\frac{2H-1}{2H+1}\) for $\kappa$. 

\begin{lemma}\label{lem:size-U}
We have
 \[ \limsup_{n \rightarrow \infty}\lpnorm{\frac{\widehat{U}(g)^n_i}{(k_n/n)^{2H}}}{p} \begin{cases} <\infty &\text{if } \kappa =\frac{2H}{2H+1},\\
=0&\text{if } \kappa> \frac{2H}{2H+1}.\end{cases}\]
\end{lemma}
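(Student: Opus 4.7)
The plan is to use the $L^p$-triangle inequality after decomposing $\Delta_{i+j-1}^n U = \Delta_{i+j-1}^n X^H + \Delta_{i+j-1}^n(\rho Z)$. Since $g$ is $C^2$ with $g(0)=g(1)=0$, the mean-value theorem yields $|\Delta g^n_j| \lec k_n^{-1}$ uniformly in $j$ and $n$. Lemma~\ref{lem:fBM-bound} provides $\lVert \Delta^n_{i+j-1} X^H\rVert_{L^{2p}} \lec n^{-H}$, while the uniform bound on $\rho$ combined with the moment assumption on $Z$ gives $\lVert \Delta^n_{i+j-1}(\rho Z)\rVert_{L^{2p}} \lec 1$. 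Squaring and applying Cauchy--Schwarz, I then obtain
\[ \bigl\lVert (\Delta^n_{i+j-1} U)^2\bigr\rVert_{L^p} \lec n^{-2H} + 1 \lec 1. \]

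In other words, at this scale the white-noise contribution dominates the smoother fractional component. Applying the triangle inequality to the sum defining $\widehat U(g)^n_i$ then gives
\[ \bigl\lVert\widehat{U}(g)^n_i\bigr\rVert_{L^p} \leq \sum_{j=1}^{k_n} (\Delta g^n_j)^2 \bigl\lVert (\Delta^n_{i+j-1} U)^2\bigr\rVert_{L^p} \lec k_n \cdot k_n^{-2} = k_n^{-1}. \]
Dividing by $(k_n/n)^{2H}$ and substituting $k_n \sim n^\kappa/\theta$, this becomes
\[ \lpnorm{\frac{\widehat{U}(g)^n_i}{(k_n/n)^{2H}}}{p} \lec \frac{n^{2H}}{k_n^{2H+1}} \asymp n^{2H - \kappa(2H+1)}, \]
whose exponent vanishes for $\kappa = 2H/(2H+1)$ and is strictly negative for $\kappa > 2H/(2H+1)$. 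This gives uniform boundedness in the first case and convergence to zero in the second, establishing both assertions.

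There is no serious obstacle: the argument is essentially routine once one observes that the unbounded noise increments keep the $L^p$-norm of $(\Delta^n_{i+j-1}U)^2$ of order one regardless of $n$, so any shrinkage must come from the small weights $(\Delta g^n_j)^2$ and the length of the sum. The critical exponent $\kappa=2H/(2H+1)$ emerges precisely as the value at which $k_n^{2H+1}$ balances $n^{2H}$, confirming the comment preceding the lemma that the sharper lower bound on $\kappa$ (as compared with $\tfrac{2H-1}{2H+1}$ from the previous step) is genuinely forced by the noise term rather than by the fractional signal.
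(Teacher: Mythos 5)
Your argument is correct and follows essentially the same route as the paper: bound $(\Delta g^n_j)^2\lec k_n^{-2}$ by the mean-value theorem, use Lemma~\ref{lem:fBM-bound} and the moment assumptions to see that $\lVert(\Delta^n_{i+j-1}U)^2\rVert_{L^p}\lec 1$ with the noise dominating, and sum to get $n^{2H}/k_n^{2H+1}$, which is bounded at $\kappa=\frac{2H}{2H+1}$ and vanishes for larger $\kappa$. The only cosmetic difference is that you bound the signal and noise increments together in one stroke, whereas the paper treats $\widehat{X^H}(g)^n_i$ and $\widehat{\rho Z}(g)^n_i$ separately (it reuses the former bound later); both yield the same critical exponent.
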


\begin{proof}
Because \begin{equation}\label{eq:help} 
	\lpnorm{\frac{\widehat{X^H}(g)^n_i}{( {k_n}/{n})^{2H}}}{p}\leq \frac{n^{2H}}{k_n^{2H}} \sum_{j=1}^{k_n}|\Delta g^n_j|^2\lpnorm{(\Delta^n_{i+j-1}X^H)^2}{p} \lec \frac{1}{k_n^{2H+1}} \rightarrow 0,
\end{equation}  we   only need to worry about the noise part, which can be bounded by
  \begin{equation*}
    \lpnorm{\frac{\widehat{\rho Z}(g)^n_i}{(k_n/n)^{2H}}}{p} \leq \frac{n^{2H}}{k_n^{2H}}\sum_{j=1}^{k_n}\left( \Delta g^n_j \right)^2\lpnorm{(\Delta_{i+j-1}^n \rho Z)^2}{p} = \frac{n^{2H}}{k_n^{2H}}\sum_{j=1}^{k_n} \bigg(\frac{g' ( \xi^n_i  )}{k_n}\bigg)^2 \lpnorm{(\Delta_{i+j-1}^n \rho Z)^2}{p}.\end{equation*}
Now, since $\rho$ is bounded and $Z$ has uniformly bounded $p$th moments, the $L^p$-norm may vary with \(p\), but not \(n\), so the above is bounded by
\begin{equation}\label{eq:help2}
                                                           C_p\frac{n^{2H}}{k_n^{2H}}\sum_{j=1}^{k_n}\left( \frac{g'(\xi^n_i)}{k_n} \right)^2  \lec\frac{n^{2H}}{k_n^{2H+1}},
  \end{equation}
which implies the claim since $k_n \sim n^\kappa/\theta$.
\end{proof}

In comparison with semimartingales, 
the problematic part of working with fractional processes  is that the domain of integration in   stochastic integrals stretches all the way from \(0\). In the semimartingale case (i.e., with \(H = \frac{1}{2}\)), we have that an increment \(X^{1/2}_{t_2} - X^{1/2}_{t_1} = \int_{t_1}^{t_2} \si_s dB_s\), but for $H\neq \frac12$,
\[
  X^H_{t_2} - X^H_{t_1} = K_H^{-1}\int_0^{t_2} \Big[(t_2- s)^{H - \frac{1}{2}} - (t_1 - s)_+^{H - \frac{1}{2}} \Big]\si_sdB_s.
\]
What we would like to be able to do is to truncate this interval arbitrarily close to where we begin averaging, namely at \(\frac{i}{n}\) in the case of \(\overline{U}(g)^n_i\). To this purpose, we introduce \textit{truncated} increments of  \(X^H\), defined as
  \begin{equation}
    \Delta_{i}^{n,\epsilon} X^H= K_H^{-1} \int_{\frac{i}{n}-\epsilon}^{\frac{i}{n}} \Big[  ( \tfrac{i}{n} -s  )^{H - \frac{1}{2}} -  ( \tfrac{i-1}{n} -s  )_+^{H - \frac{1}{2}} \Big]\sigma_s dB_s,
  \end{equation}
where $\eps>0$ is a truncation parameter. Correspondingly, we define
  \begin{equation}
    \Delta_{i}^{n,\epsilon} U = \Delta_{i}^{n,\epsilon} X^H+ \Delta_{i}^n \rho Z,\quad \overline{U}(g)^{n,\eps}_i = \sum_{j=1}^{k_n-1}g^n_j\Delta_{i+j-1}^{n,\eps} U,  \quad
    \widehat{W}(g)^n_i =   \sum_{j=1}^{k_n} (\Delta g^n_j )^2 (\Delta_{i+j-1}^{n,\eps} U)^2.
  \end{equation}

In order for this truncation to be a reasonable endeavor, we need the error we are making to be small; the amount which we discard must vanish in the limit if we take \(\epsilon \rightarrow 0\).

\begin{lemma} For any $\eps>0$,
  \begin{equation}
    \limsup_{n \rightarrow \infty} \lpnorm{V(g)^{n,f}_T(U) - \frac{1}{n}\sum_{i= \lfloor n\epsilon  \rfloor + 1}^{ \lfloor nT  \rfloor} f
\bigg( \frac{\overline{U}(g)^{n,\epsilon}_i}{(k_n/n)^H}, \frac{\widehat{U}(g)^{n,\epsilon}_i}{(k_n/n)^{2H}} \bigg)}{p} \lec \epsilon.
  \end{equation}
\end{lemma}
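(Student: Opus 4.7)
The plan is to split the expression into a boundary piece $\mathrm{I}_n$ coming from the indices $1\leq i\leq \lfloor n\eps\rfloor$ present in $V(g)^{n,f}_T(U)$ but absent from the truncated sum, and a bulk piece $\mathrm{II}_n$ coming from $\lfloor n\eps\rfloor+1\leq i\leq \lfloor nT\rfloor$ where the summands differ only by the truncation of the fractional integrals. For $\mathrm{I}_n$ I target a direct bound of order $\eps$ in $L^p$, and for $\mathrm{II}_n$ a bound $o(1)$ as $n\to\infty$ (with $\eps$ fixed), which certainly gives $\limsup_n \lpnorm{\mathrm{II}_n}{p}=0\leq \eps$.

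For $\mathrm{I}_n$, I would first establish that the two arguments of $f$ are bounded in every $L^p$ uniformly in $n$ and $i$. The fractional contributions are handled by Proposition~\ref{prop:Lp} and Lemma~\ref{lem:size-U}, while for the noise contributions I would perform a summation-by-parts exploiting $g(0)=g(1)=0$ (so that all resulting weights are of order $k_n^{-1}$) and invoke the $\calf$-conditional independence of the $Z_i$'s; this yields $\lpnorm{\overline{\rho Z}(g)^n_i}{p} \lec k_n^{-1/2}$ and $\lpnorm{\widehat{\rho Z}(g)^n_i}{p} \lec k_n^{-1}$, both of which become $O(1)$ after division by $(k_n/n)^H$ and $(k_n/n)^{2H}$ precisely when $\kappa \geq \tfrac{2H}{2H+1}$. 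The polynomial growth of $f$ then yields $\lpnorm{f(\cdots)}{p}$ uniformly bounded in $i$, so by triangle inequality $\lpnorm{\mathrm{I}_n}{p} \lec \lfloor n\eps\rfloor/n \leq \eps$.

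For $\mathrm{II}_n$, I would apply the mean-value theorem to each summand (as in the drift-removal lemma) to rewrite the difference as $\nabla f(\xi^n_i)$ paired with the two normalized differences $(\overline{U}(g)^n_i - \overline{U}(g)^{n,\eps}_i)/(k_n/n)^H$ and $(\widehat{U}(g)^n_i - \widehat{U}(g)^{n,\eps}_i)/(k_n/n)^{2H}$. Since $\nabla f$ has polynomial growth and the uniform $L^q$-bounds from the previous paragraph apply to both truncated and non-truncated quantities, H\"older's inequality reduces matters to proving that these two normalized differences vanish in $L^p$ uniformly in $i$. In each case the noise parts cancel, since truncation affects only $X^H$. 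The core estimate I would prove is
\[
\lpnorm{\Delta^n_i X^H - \Delta^{n,\eps}_i X^H}{p} \lec n^{-1}\eps^{H-1}:
\]
by the mean-value theorem $\lvert(i/n-s)^{H-1/2}-((i-1)/n-s)^{H-1/2}\rvert\lec n^{-1}((i-1)/n-s)^{H-3/2}$ for $s\leq i/n-\eps$, and BDG combined with Minkowski's integral inequality gives $\lpnorm{\cdot}{p}^2 \lec n^{-2}\int_{\eps-1/n}^{(i-1)/n} u^{2H-3}\,du \lec n^{-2}\eps^{2H-2}$, the primitive evaluated at the lower endpoint dominating because $H<1$.

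Propagating this estimate through the pre-averaging---by the triangle inequality for $\overline{U}$ and by the factorization $(\Delta^n U)^2-(\Delta^{n,\eps} U)^2 = (\Delta^n X^H-\Delta^{n,\eps} X^H)(\Delta^n U+\Delta^{n,\eps} U)$ followed by Cauchy--Schwarz in $L^{2p}$ for $\widehat{U}$---and dividing by the normalizations produces bounds of order $(k_n/n)^{1-H}\eps^{H-1}$ and $n^{2H-1}k_n^{-2H-1}\eps^{H-1}$ respectively. Both tend to $0$ as $n\to\infty$ because $\kappa<1$ and $\kappa\geq \tfrac{2H}{2H+1}>\tfrac{2H-1}{2H+1}$; summing $o(1)$ uniformly over at most $nT$ indices and dividing by $n$ gives $\lpnorm{\mathrm{II}_n}{p}\to 0$. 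The main technical obstacle is the tail estimate on $\Delta^n_i X^H - \Delta^{n,\eps}_i X^H$ with the correct $\eps^{H-1}$ scaling: the singularity of the kernel $u^{2H-3}$ is avoided precisely because the truncation pushes us away from $u=0$ by $\eps$, and it is this controlled blow-up in $\eps$ that is compensated by the $(k_n/n)^{1-H}$ and $n^{2H-1}k_n^{-2H-1}$ factors under our standing conditions on $\kappa$.
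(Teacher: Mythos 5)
Your proposal is correct and follows essentially the same route as the paper: the same split into the boundary sum over $i\le\lfloor n\eps\rfloor$ (bounded by $\eps$ via uniform $L^p$-bounds on the arguments of $f$) and the bulk sum handled by the mean-value theorem, with the key estimate $\lVert \Delta^n_i X^H-\Delta^{n,\eps}_i X^H\rVert_{L^p}\lec n^{-1}\eps^{H-1}$ obtained exactly as in the paper from BDG and the mean-value theorem on the kernel. The only (immaterial) difference is that you propagate this per-increment bound through the weighted sums by the triangle inequality, giving a slightly cruder but still vanishing normalized bound, whereas the paper applies BDG to the pre-averaged integral directly.
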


\begin{proof}
  The difference is composed of two parts:
  \begin{gather}
    \label{eqn:truncdif1}
    \frac{1}{n}\sum_{i=1}^{ \lfloor n\epsilon  \rfloor} f\bigg( \frac{\overline{U}(g)^n_i}{(k_n/n)^H}, \frac{\widehat{U}(g)^n_i}{(k_n/n)^H} \bigg), \\
    \label{eqn:truncdif2}
    \frac{1}{n}\sum_{i= \lfloor n\epsilon  \rfloor + 1}^{ \lfloor nT  \rfloor}\bigg[ f \bigg( \frac{\overline{U}(g)^{n}_i}{(k_n/n)^H}, \frac{\widehat{U}(g)^{n}_i}{(k_n/n)^H} \bigg) -  f \bigg( \frac{\overline{U}(g)^{n,\epsilon}_i}{(k_n/n)^H}, \frac{\widehat{U}(g)^{n,\epsilon}_i}{(k_n/n)^H} \bigg) \bigg ].
  \end{gather}
The \(f(\cdot)\)-term in \eqref{eqn:truncdif1} is of size \(1\), since \(f\) is assumed to be of polynomial growth and we showed previously that the arguments are at worst of size \(1\). This show that \eqref{eqn:truncdif1} is $\lec \eps$. Next, we want to show that   \eqref{eqn:truncdif2} converges to $0$. 

The mean-value theorem reduces (\ref{eqn:truncdif2}) to
  \begin{gather}
    \frac{1}{n}\sum_{ \lfloor n\epsilon  \rfloor + 1}^{ \lfloor nT  \rfloor} \begin{pmatrix} \partial_1 f(\xi_{1,i}^n) \\ \partial_2 f(\xi_{2,i}^n) \end{pmatrix}^T   \begin{pmatrix} \frac{\overline{U}(g)^{n,\epsilon}_i - \overline{U}(g)^{n}_i}{(k_n/n)^H} \\ \frac{\widehat{U}(g)^{n,\epsilon}_i - \widehat{U}(g)^{n}_i}{(k_n/n)^{2H}} \end{pmatrix}.
  \end{gather}
As seen before, the partial derivatives are of size $1$, so all that matters is that the differences on the right go to $0$.
  Explicitly,
  \begin{align*}
    \overline{U}(g)^{n}_i - \overline{U}(g)^{n,\epsilon}_i &= \sum_{j=1}^{k_n-1}g^n_i(\Delta_{i+j-1}^{n}U - \Delta_{i+j-1}^{n,\epsilon}U) \\
                                                           &= K_H^{-1} \sum_{j=1}^{k_n-1}g^n_i\int_0^{\frac{i+j-1}{n}-\epsilon}\Big[  ( \tfrac{i+j-2}{n} -s  )^{H -  {1}/{2}} -  ( \tfrac{i+j-1}{n} -s  )^{H -  {1}/{2}} \Big]\sigma_sdB_s,
\end{align*}
so 
   by the BDG inequality, the boundedness of $\si$ and a substitution \(r = \frac{i+j-1}{n}-s\),
\begin{align*}
    \lpnorm{\overline{U}(g)^{n}_i - \overline{U}(g)^{n,\epsilon}_i}{p} &\lec \bigg(\int_0^{\frac{i+j-1}{n}-\epsilon} \Big[  ( \tfrac{i+j-1}{n} -s  )^{H -  {1}/{2}} -  ( \tfrac{i+j-2}{n} -s  )^{H -  {1}/{2}} \Big]^2 ds\bigg)^{\frac{1}{2}}
                                                             \\
&=   \bigg( \int_{\epsilon }^{\frac{i+j-1}{n}} \Big( r^{H- {1}/{2}} - (r-\tfrac1n)^{H-\frac{1}{2}} \Big)^2dr\bigg) ^{\frac{1}{2}} 
\end{align*}
By the mean-value theorem, it follows that 
 \begin{align*}  \lpnorm{\overline{U}(g)^{n}_i - \overline{U}(g)^{n,\epsilon}_i}{p} &\lec  n^{-1} \bigg( \int_\epsilon^{\infty}   r^{2H-3}  dr\bigg) ^{\frac{1}{2}} \lec C(\eps)n^{-1}.
  \end{align*}
  Normalizing by \((k_{n}/n)^{H}\), we get that \[\frac{\lpnorm{\overline{U}(g)^{n}_i - \overline{U}(g)^{n,\epsilon}_i}{p}}{(k_{n}/n)^{H}} \lec C(\eps)n^{-(1-H)-\kappa H}\to0.\] 

  Next,
  \begin{align*}
    \widehat{U}(g)^{n}_i - \widehat{U}(g)^{n,\epsilon}_i &= \sum_{j=1}^{k_n-1}(\Delta g^n_i)^2\Big((\Delta_{i+j-1}^{n}U)^2 - (\Delta_{i+j-1}^{n,\epsilon}U)^2\Big) \\
                                                         &= \sum_{j=1}^{k_n-1}(\Delta g^n_i)^2(\Delta_{i+j-1}^{n}U - \Delta_{i+j-1}^{n,\epsilon}U)(\Delta_{i+j-1}^{n}U + \Delta_{i+j-1}^{n,\epsilon}U)
\end{align*}
We just saw that $\Delta_{i+j-1}^{n}U - \Delta_{i+j-1}^{n,\epsilon}U$ is of size $n^{-1}$, while $\Delta_{i+j-1}^{n}U + \Delta_{i+j-1}^{n,\epsilon}U$ is of size $1$. Thus, $\widehat{U}(g)^{n}_i - \widehat{U}(g)^{n,\epsilon}_i $ is of size $k_n^{-1}n^{-1}$, which still  goes to $0$ after dividing by $(k_n/n)^{2H}$.
\end{proof}

Our next step is to discretize \(\sigma\) and remove the fractional portion from the second argument.

\begin{lemma} We have
  \begin{multline*}
      \lim_{\epsilon \rightarrow 0}\limsup_{n \rightarrow \infty} \Bigg\lVert \frac{1}{n}\sum_{i= \lfloor n\epsilon  \rfloor + 1}^{ \lfloor nT  \rfloor} \bigg[ f\bigg( \frac{\overline{U}(g)^{n,\epsilon}_i}{(k_n/n)^H}, \frac{\widehat{U}(g)^{n,\epsilon}_i}{(k_n/n)^{2H}} \bigg)\\ 
      -f\bigg( \frac{\sigma_{\frac{i}{n}-\epsilon}\overline{B^{H}}(g)^{n,\epsilon}_i +  \overline{
      		\rho Z}(g)^n_i}{(k_n/n)^{H}}, \frac{\widehat{\rho Z}(g)^n_i}{(k_n/n)^{2H}}\bigg) \bigg] \Bigg\rVert_{L_p} = 0
  \end{multline*}
  where \(B^{H}_t=K_H^{-1}\int_0^t (t-s)^{H-1/2}dB_s\) is fractional Brownian motion with unit volatility.
\end{lemma}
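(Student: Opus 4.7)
The plan is to apply the mean-value theorem to the difference of the two $f$-terms, reducing it to $\partial_1 f(\xi_{1,i}^n)\Delta_{1,i}^n + \partial_2 f(\xi_{2,i}^n)\Delta_{2,i}^n$, where $\Delta_{k,i}^n$ denote the normalized differences of the first and second arguments, respectively. The intermediate values $\xi_{k,i}^n$ will have uniformly bounded moments of all orders: on the $U$-side this is Proposition~\ref{prop:Lp} and Lemma~\ref{lem:size-U}; on the frozen-coefficient side the same arguments apply with $\sigma\equiv 1$ for the $\overline{B^H}$-term, while $\overline{\rho Z}(g)^n_i/(k_n/n)^H$ and $\widehat{\rho Z}(g)^n_i/(k_n/n)^{2H}$ are controlled by the boundedness of $\rho$ and the moments of $Z$ (cf.~\eqref{eq:help2}). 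Since $f$ has polynomial-growth partial derivatives, $\lpnorm{\partial_k f(\xi_{k,i}^n)}{p'}\lec 1$, and H\"older's inequality then reduces the claim to showing that $\sup_i\lpnorm{\Delta_{k,i}^n}{p}\to 0$ in the iterated limit.

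For the first coordinate, I would write $\overline{U}(g)^{n,\eps}_i - \overline{\rho Z}(g)^n_i = \sum_{j=1}^{k_n-1}g^n_j\Delta^{n,\eps}_{i+j-1}X^H = \int\Phi^{n,\eps}_i(s)\sigma_s\,dB_s$, where $\Phi^{n,\eps}_i$ aggregates the $g^n_j$-weighted, truncated fractional kernels and is supported in $[i/n-\eps,(i+k_n-1)/n]$. Subtracting the frozen-coefficient version $\sigma_{i/n-\eps}\overline{B^H}(g)^{n,\eps}_i = \sigma_{i/n-\eps}\int\Phi^{n,\eps}_i(s)\,dB_s$ and combining BDG with Minkowski's integral inequality yields
\[ \lpnorm{\overline{U}(g)^{n,\eps}_i - \overline{\rho Z}(g)^n_i - \sigma_{i/n-\eps}\overline{B^H}(g)^{n,\eps}_i}{p}^2 \lec \int\Phi^{n,\eps}_i(s)^2\lpnorm{\sigma_s - \sigma_{i/n-\eps}}{p}^2\,ds. \]
Since truncation can only reduce the $L^2$-norm of the kernel, $\int\Phi^{n,\eps}_i(s)^2\,ds\lec (k_n/n)^{2H}$ by (the proof of) Proposition~\ref{prop:Lp} applied with $\sigma\equiv 1$. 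The $L^p$-continuity and uniform boundedness of $\sigma$, together with compactness of $[0,T]$, give $\lpnorm{\sigma_s-\sigma_{i/n-\eps}}{p}\leq \omega(\eps+k_n/n)$ on the support of $\Phi^{n,\eps}_i$, with $\omega(h)\to 0$ as $h\to 0$; dividing by $(k_n/n)^H$ then gives $\lpnorm{\Delta_{1,i}^n}{p}\lec \omega(\eps+k_n/n)$, which vanishes as $n\to\infty$ and then $\eps\to 0$.

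For the second coordinate, the expansion $\Delta^{n,\eps}_{i+j-1}U = \Delta^{n,\eps}_{i+j-1}X^H + \Delta^n_{i+j-1}\rho Z$ gives
\[ \widehat{U}(g)^{n,\eps}_i - \widehat{\rho Z}(g)^n_i = \sum_{j=1}^{k_n}(\Delta g^n_j)^2\Bigl[(\Delta^{n,\eps}_{i+j-1}X^H)^2 + 2\Delta^{n,\eps}_{i+j-1}X^H\cdot\Delta^n_{i+j-1}\rho Z\Bigr]. \]
The truncated increment still satisfies $\lpnorm{\Delta^{n,\eps}_{i+j-1}X^H}{2p}\lec n^{-H}$ by the BDG argument of Lemma~\ref{lem:fBM-bound}, so the first sum contributes $\lec k_n^{-1}n^{-2H}$, which becomes $\lec k_n^{-2H-1}\to 0$ after dividing by $(k_n/n)^{2H}$, as in \eqref{eq:help}. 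For the cross sum, H\"older gives $\lpnorm{\Delta^{n,\eps}_{i+j-1}X^H\cdot\Delta^n_{i+j-1}\rho Z}{p}\lec n^{-H}$ (using boundedness of $\rho$ and moments of $Z$), and the normalized contribution is $\lec n^{-\kappa(2H+1)+H}$, which vanishes since $\kappa\geq \tfrac{2H}{2H+1}>\tfrac{H}{2H+1}$.

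The main obstacle is the first coordinate: in the semimartingale case $H=\tfrac12$ the integral domain is local, but for $H\neq\tfrac12$ freezing $\sigma$ at $i/n-\eps$ produces a stochastic-integral error whose integrand involves $\sigma_s-\sigma_{i/n-\eps}$ across a window of length of order $\eps+k_n/n$. The truncation at level $\eps$ is precisely what keeps this window short enough for the $L^p$-continuity modulus of $\sigma$ to vanish in the outer limit $\eps\to 0$.
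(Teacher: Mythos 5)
Your proposal is correct and follows essentially the same route as the paper: mean-value theorem, polynomial-growth control of the intermediate points via Proposition~\ref{prop:Lp} and Lemma~\ref{lem:size-U}, and then separate treatment of the two normalized coordinate differences, with the second coordinate handled exactly as in the paper through \eqref{eq:help} and \eqref{eq:help2}. For the first coordinate, your bound $\lpnorm{\cdot}{p}^2\lec\int\Phi^{n,\eps}_i(s)^2\lpnorm{\sigma_s-\sigma_{i/n-\eps}}{p}^2\,ds$ is the same estimate the paper obtains by Abel summation and Lemma~\ref{lem:fBM-bound} applied with volatility $\sigma_s-\sigma_{\frac in-\eps}$; the only imprecision is your justification of $\int\Phi^{n,\eps}_i(s)^2\,ds\lec(k_n/n)^{2H}$: since each increment $\Delta^{n,\eps}_{i+j-1}X^H$ is truncated at its own point $\frac{i+j-1}{n}-\eps$, the aggregated kernel $\Phi^{n,\eps}_i$ is \emph{not} a pointwise restriction of the untruncated kernel, so ``truncation can only reduce the $L^2$-norm'' does not literally apply (restricting each summand to a different set can destroy the cancellation that Proposition~\ref{prop:Lp} exploits). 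The paper sidesteps this by first observing that all increments may instead be truncated at the common point $\frac in-\eps$, the cost of which is already controlled by the preceding truncation lemma; after that replacement your restriction argument, and hence the whole proof, goes through verbatim.
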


\begin{proof}
  Via the mean-value theorem, the difference becomes
  \begin{equation}
    \frac{1}{n}\sum_{i= \lfloor n\epsilon  \rfloor + 1}^{ \lfloor nT  \rfloor} \begin{pmatrix} \partial_1 f(\xi_{1,i}^n) \\ \partial_2 f(\xi_{2,i}^n) \end{pmatrix}^T \begin{pmatrix} \frac{\overline{X^H}(g)^{n,\epsilon}_i - \sigma_{ {i}/{n}-\epsilon}\overline{B^{  H}}(g)^{n,\epsilon}_i}{(k_n/n)^H} \\ \frac{\widehat{U}(g)^{n,\epsilon}_i - \widehat{\rho Z}(g)^{n}_i}{(k_n/n)^{2H}} \end{pmatrix},
  \end{equation}
where the exact values of $\xi^n_{1,i}$ and $\xi^{n}_{2,i}$ may be different from before.
Again, all we care about are the differences
  \begin{equation}
    \label{eqn:discdiff1}
    \frac{\overline{X^{H}}(g)^{n,\epsilon}_i - \sigma_{\frac{i}{n}-\epsilon}\overline{B^{\prime H}}(g)^{n,\epsilon}_i}{(k_n/n)^H},\qquad
    \frac{\widehat{U}(g)^{n,\epsilon}_i - \widehat{\rho Z}(g)^{n}_i}{(k_n/n)^{2H}}.
  \end{equation}
In order to handle the first term in
    (\ref{eqn:discdiff1}), we recall $h$ from the proof of Lemma~\ref{lem:fBM-bound} and write
  \begin{align*}
    \overline{X^{H}}(g)_{i}^{n,\epsilon} - \sigma_{\frac{i}{n}-\epsilon}\overline{B^{  H}}(g)^{n,\epsilon}_{i} &= \sum_{j=1}^{k_n-1}g_j^n\Delta^{n, \epsilon}_{i+j-1}X^H - \sum_{j=1}^{k_n-1}g_j^n\sigma_{\frac{i}{n}-\epsilon}\Delta^{n, \epsilon}_{i+j-1}B^{ H}\\
    &= \sum_{j=1}^{k_n-1}g_j^n  \int^{\frac{i+j-1}{n}}_{\frac{i+j-1}{n}-\epsilon}\Big(h(\tfrac{i+j-1}{n}-s)-h(\tfrac{i+j-2}{n}-s)\Big)(\sigma_s-\si_{\frac in-\eps})dB_s.
\end{align*}
Recall that the lower bound in the above integral is a consequence of the truncation from the previous lemma, where we removed the integral from $0$ to $\frac{i+j-1}{n}-\eps$. Clearly, we could have only removed the integral from $0$ to $\frac{i}{n}-\eps$, leaving the portion from $\frac{i}{n}-\eps$ to $\frac{i+j-1}{n}$. In other words, there is no harm to replace the integral in the last display by
\[\Delta^{n,\epsilon}_{i,j}\wt{X^H}=\int^{\frac{i+j-1}{n}}_{\frac{i}{n}-\epsilon}\Big(h(\tfrac{i+j-1}{n}-s)-h(\tfrac{i+j-2}{n}-s)\Big)(\sigma_s-\si_{\frac in-\eps})dB_s\]  and \(\wt{X}^{H,n,\epsilon}_{i}= X^{H n,\epsilon}_{i} - \sigma_{\frac{i}{n}-\epsilon}B^{\prime H n, \epsilon}_{i}\).
    Then, similarly to the proof of Proposition~\ref{prop:Lp}, 
    \begin{equation*}
      \lpnorm{\sum_{j=1}^{k_n-1}g_j^n\Delta^{n,\epsilon}_{i,j}\wt{X^H}}{p} = \lpnorm{\sum_{j=1}^{k_n-1}\sum_{l=0}^{j}\Delta g^n_l\Delta^{n,\epsilon}_{i,j}\wt{X^H}}{p} \leq   \lpnorm{\sum_{l=0}^{k_n-1}\Delta g^n_l\sum_{j=l}^{k_n-1}\Delta^{n,\epsilon}_{i,j}\wt{X^H}}{p}.
    \end{equation*}
By definition,
\[ \sum_{j=l}^{k_n-1}\Delta^{n,\epsilon}_{i,j}\wt{X^H} =\int^{\frac{i+k_n-2}{n}}_{\frac{i}{n}-\epsilon}\Big(h(\tfrac{i+k_n-2}{n}-s)-h(\tfrac{i+l-2}{n}-s)\Big)(\sigma_s-\si_{\frac in-\eps})dB_s,  \] 
which is a truncated fractional increment over an interval of length $(k_n-\ell)/n$ with volatility $\si_s-\si_{i/n-\eps}$. By Lemma~\ref{lem:fBM-bound} and the $L^p$-continuity of $\si$, we can therefore bound
\[ \lpnorm{ \sum_{j=l}^{k_n-1}\Delta^{n,\epsilon}_{i,j}\wt{X^H}}{p} \lec (k_n/n)^{H}\phi_p(2\eps), \]
where $\phi_p(\tau)= \sup_{s,t\in[0,T],\lvert s-t\rvert \leq \tau}\E[\lvert \si_t-\si_s\rvert^p]^{1/p}$ satisfies $\phi_p(\tau)\to0$ as $\tau\to0$.
Applying H\"older's inequality, we then get
    \begin{equation*}
   \lpnorm{\sum_{l=0}^{k_n-1}\Delta g^n_l\sum_{j=l}^{k_n-1}\Delta^{n,\epsilon}_{i,j}\wt{X^H}}{p} \leq k_n^{1-1/p}\Bigg(\sum_{l=0}^{k_n-1}\lvert\Delta g^n_l\rvert^p\E\bigg[\bigg|\sum_{j=l}^{k_n-1}\Delta^{n,\epsilon}_{i,j}\wt{X^H} \bigg|^p\bigg]\Bigg) ^{\frac{1}{p}}
    \lec  ( {k_n}/{n} )^{H}\phi_p(2\eps).
    \end{equation*}
Dividing by $(k_n/n)^H$, we can first let $n\to\infty$ and then $\eps\to0$ to show that the first term in \eqref{eqn:discdiff1} is negligible.

Regarding the second term in \eqref{eqn:discdiff1}, we decompose
\begin{equation*}
\widehat{U}(g)^{n,\epsilon}_i - \widehat{\rho Z}(g)^{n}_i	=\sum_{j=1}^{k_n} (\Delta g^n_j)^2 (\Delta^n_{i+j-1} X^H)^2 +\sum_{j=1}^{k_n} (\Delta g^n_j)^2 (\Delta^n_{i+j-1} X^H)(\Delta^n_{i+j-1} \rho Z).
\end{equation*}
We have already shown in \eqref{eq:help} that the first term divided by $(k_n/n)^{2H}$ converges to $0$ in $L^p$. Applying the Cauchy--Schwarz inequality, we can bound the second term by
\[\Bigg(\sum_{j=1}^{k_n} (\Delta g^n_j)^2 (\Delta^n_{i+j-1} X^H)^2\Bigg)^{1/2}\Bigg(\sum_{j=1}^{k_n} (\Delta g^n_j)^2 (\Delta^n_{i+j-1} \rho Z)^2\Bigg)^{1/2},\]
so \eqref{eq:help} and \eqref{eq:help2} imply that the second term is also negligible.
\end{proof}

The advantage of having both truncated the fractional increments and discretized $\si$ is that we can now prove a law of large numbers.
\begin{lemma} We have
  \begin{multline*}
  \lim_{\eps\to0}\limsup_{n\to\infty}  \Bigg\lVert \frac{1}{n}\sum_{i =  \lfloor n\epsilon  \rfloor + 1}^{ \lfloor nT  \rfloor} \Bigg\{   f\bigg( \frac{\sigma_{\frac{i}{n}-\epsilon}\overline{B^H}(g)^{n,\epsilon}_i + \overline{\rho Z}(g)^n_i}{(k_n/n)^{2H}}, \frac{\widehat{\rho Z}(g)^n_i}{(k_n/n)^{2H}}\bigg)  \\    -  \E\bigg[ f\bigg( \frac{\sigma_{\frac{i}{n}-\epsilon}\overline{B^{ H}}(g)^{n,\epsilon}_i + \overline{\rho Z}(g)^n_i}{(k_n/n)^{H}}, \frac{\widehat{\rho Z}(g)^n_i}{(k_n/n)^{2H}}\bigg) \mathrel{\Big|} \mathcal{F}_{\frac{i}{n}-\epsilon} \bigg] \vphantom{\frac{1}{n}\sum_{i = \left\lfloor n\epsilon \right\rfloor + 1}^{\left\lfloor nT \right\rfloor}} \Bigg\} \Bigg\rVert=0.
  \end{multline*}
\end{lemma}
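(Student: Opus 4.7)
The plan is to argue that, after centering, the summands form (in a suitably enlarged filtration) a conditional martingale difference sequence with a dependence window of size $m_n \sim k_n + n\eps$, and then apply a blocking argument together with the Burkholder--Davis--Gundy inequality to conclude convergence to $0$ in $L^p$.

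\textbf{Step 1: $L^p$-boundedness.} Let $\Xi^n_i$ denote the $f$-term inside the sum, and set $\chi^n_i = \Xi^n_i - \E[\Xi^n_i \mid \calf_{i/n-\eps}]$. Combining Proposition~\ref{prop:Lp} (applied to $B^H$ with unit volatility), Lemma~\ref{lem:size-U}, the boundedness of $\si$ and $\rho$, the uniform moment bound on $Z$, and the polynomial growth of $f$ and $\partial f$, one obtains $\sup_{n,i}\lpnorm{\chi^n_i}{p}<\infty$ for every $p<\infty$.

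\textbf{Step 2: martingale structure in an enlarged filtration.} Introduce $\calg_t := \calf_t \vee \si(Z_s: s\leq t)$. Since the only $Z$-randomness entering $\Xi^n_i$ is $\{Z_{(i+j-1)/n}: 1\leq j\leq k_n\}$, the variable $\Xi^n_i$ (and hence $\chi^n_i$) is $\calg_{(i+k_n-1)/n}$-measurable. Set $m_n := k_n + \lceil n\eps\rceil + 1$. For indices with $j - i \geq m_n$, one has $(i+k_n-1)/n \leq j/n - \eps$, so $\chi^n_i$ is $\calg_{j/n-\eps}$-measurable. Moreover every $Z$ appearing in $\Xi^n_j$ sits at a time $\geq j/n > j/n - \eps$ and is therefore independent of $\calg_{j/n-\eps}$; this independence, together with the fact that $\calg_{j/n-\eps}$ contains no additional $\calf$-information beyond $\calf_{j/n-\eps}$, yields
\[ \E[\Xi^n_j \mid \calg_{j/n-\eps}] = \E[\Xi^n_j \mid \calf_{j/n-\eps}], \]
and thus $\E[\chi^n_j \mid \calg_{j/n-\eps}] = 0$. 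This is the key identity.

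\textbf{Step 3: blocking and BDG.} Partition $\{\lfloor n\eps\rfloor+1,\ldots,\lfloor nT\rfloor\}$ into $m_n$ sub-sequences $\calb_r := \{i: i\equiv r \pmod{m_n}\}$, each of size $K_r \lec n/m_n$ and with consecutive elements $i_{k+1} - i_k = m_n$. Relative to the (increasing) filtration $\cali_k := \calg_{(i_k+k_n-1)/n}$, Step~2 and the choice of $m_n$ give $\cali_k \subseteq \calg_{i_{k+1}/n-\eps}$ and hence, by the tower property,
\[ \E[\chi^n_{i_{k+1}}\mid \cali_k] = \E\bigl[\E[\chi^n_{i_{k+1}}\mid \calg_{i_{k+1}/n-\eps}] \bigm| \cali_k \bigr] = 0, \]
so $(\chi^n_{i_k})_k$ is a martingale difference sequence. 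The Burkholder--Davis--Gundy inequality combined with Step~1 gives $\lpnorm{\sum_{i\in \calb_r}\chi^n_i}{p} \lec \sqrt{n/m_n}$. Summing over the $m_n$ residue classes via the triangle inequality yields
\[ \bigg\lVert \frac{1}{n}\sum_i \chi^n_i \bigg\rVert_{L^p} \lec \frac{m_n}{n}\sqrt{n/m_n} = \sqrt{m_n/n} \lec \sqrt{n^{\kappa-1}+\eps}, \]
which vanishes first as $n\to\infty$ (using $\kappa<1$) and then as $\eps\to 0$.

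\textbf{Main obstacle.} The delicate point is Step~2: because the noise $Z$ is adapted to the larger filtration $(\calf'_t)$ rather than $(\calf_t)$, one cannot directly treat $\chi^n_i$ as a martingale difference with respect to $\calf$. The remedy is to pass to the enlarged filtration $\calg_t$ and exploit the fact that future $Z$'s remain independent of $\calg_{j/n-\eps}$, which makes the $\calf_{j/n-\eps}$-centering coincide with the $\calg_{j/n-\eps}$-centering. Once this $\si$-algebra bookkeeping is in place, the remainder is a standard blocking-plus-Burkholder estimate for $m_n$-dependent sequences.
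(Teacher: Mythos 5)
Your argument is correct, and it rests on exactly the same structural observation as the paper's proof: the centered summand $\chi^n_i$ involves randomness only from the window $[\tfrac in-\eps,\tfrac{i+k_n}{n}]$, and once $j-i$ exceeds roughly $k_n+n\eps$ the variable $\chi^n_i$ is measurable with respect to the conditioning $\si$-field at time $\tfrac jn-\eps$ while $\chi^n_j$ is conditionally centered there. The difference is in the packaging. The paper works purely in $L^2$: it expands the square of the sum, notes that the covariance $\E[\chi^n_i\chi^n_j]$ vanishes as soon as the two windows are disjoint (the ``by conditioning'' step, which implicitly contains your Step~2 bookkeeping with the enlarged filtration), counts the at most $\lfloor nT\rfloor(2k_n+2n\eps)$ surviving terms, and bounds each by a constant to get $O(k_n/n+\eps)$. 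You instead make the filtration enlargement $\calg_t=\calf_t\vee\si(Z_s:s\le t)$ explicit, organize the indices into $m_n$ residue classes so that each class is a martingale difference sequence, and apply Burkholder--Davis--Gundy. This is more work but buys you a genuine $L^p$ bound of order $\sqrt{k_n/n+\eps}$ for all $p\ge 2$ rather than only the $L^2$ estimate, and it makes rigorous the measurability/independence claims that the paper leaves tacit; your identification of the $\calf_{j/n-\eps}$-centering with the $\calg_{j/n-\eps}$-centering is precisely the point that justifies the paper's one-line covariance-vanishing claim. Both routes yield the same rate and the same order of limits ($n\to\infty$ first, then $\eps\to0$), so the proposal is a valid, somewhat more robust alternative to the paper's direct second-moment computation.
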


\begin{proof}
  Abbreviate \(f(\cdot)\) to \(f_{i}\). We have to show that
  \begin{equation}
    \lim_{\eps\to0}\limsup_{n\to\infty}  \E \bigg[ \bigg( \sum_{i =\lfloor n\eps\rfloor +1}^{\lfloor nT \rfloor} \Big\{ f_i - \E  [ f_i \mid \mathcal{F}_{\frac{i}{n}-\epsilon}  ]\Big\} \bigg)^2 \bigg] = 0.
  \end{equation}
 Developing the square, this becomes
  \begin{equation}
    \label{eqn:centering1}
    \frac{1}{n^2} \sum_{i, j = \lfloor n\epsilon\rfloor + 1}^{\lfloor nT \rfloor} \E \Big[ \Big\{ f_i - \E[ f_i \mid \mathcal{F}_{\frac{i }{n}-\epsilon} ] \Big\} \Big\{f_j - \E[ f_j \mid \mathcal{F}_{\frac{j}{n}-\epsilon} ] \Big\} \Big]
  \end{equation}
  By conditioning, we can see that as soon as \( ( \frac{i}{n} - \epsilon, \frac{i + k_{n}}{n}  )\) and \( ( \frac{j}{n} - \epsilon, \frac{j + k_{n}}{n}  )\) become disjoint, then this covariance is actually $0$. 
  So in (\ref{eqn:centering1}), at most $\lfloor nT\rfloor\times (2k_n+2n\eps)$ terms are non-zero. Bounding each non-zero term by $\E[f_i^2]\leq C$, we can bound \eqref{eqn:centering1} by
  \begin{equation}
    \frac{1}{n^2} \times \lfloor nT \rfloor \times (2 k_n + 2n\epsilon) \times C \sim C\epsilon \rightarrow 0
  \end{equation}
  as \(\epsilon \rightarrow 0\). 
\end{proof}

The next lemma calculates the conditional expectations arising in the law of large numbers.

\begin{lemma} Recalling the function $\mu_f$ from \eqref{eq:muf}, we have
  \begin{multline*}
\lim_{\eps\to0}\limsup_{n \rightarrow \infty} \sum_{i=\lfloor n\epsilon \rfloor + 1}^{\lfloor nT \rfloor} \Bigg\{ \E\bigg[f\bigg( \frac{\sigma_{ {i}/{n}-\epsilon}\overline{B^{  H}}(g)^{n,\epsilon}_i + \overline{\rho Z}(g)^n_i}{(k_n/n)^{H}}, \frac{\widehat{\rho Z}(g)^n_i}{(k_n/n)^{2H}}\bigg) \mathrel{\Big|} \mathcal{F}_{\frac{i-1}{n}-\epsilon} \bigg] \\
 \mu_f\bigg( \frac{\sigma^2_{ {i}/{n}-\epsilon}}{k_n^{2H}}\sum_{j,l = 1}^{k_n-1} g^n_jg^n_l\Gamma^H_{|j-l|}, \frac{\rho^2_{i/n-\eps}}{(k_n/n)^{2H}}\sum_{j= 1}^{k_n}(\Delta g^n_j)^2 \bigg) \Bigg\} = 0,
  \end{multline*}
where 
\[ \Ga^H_r=  \frac12((r+1)^{2H}-2r^{2H}+\lvert r-1\rvert^{2H}),\qquad r\geq0, \]
is the autocorrelation function of the increments of a standard fractional Brownian motion.
\end{lemma}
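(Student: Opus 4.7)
The plan is to prove a conditional central limit theorem: conditional on $\mathcal{F}_{i/n-\eps}$, the pair $(X_{n,i}, Y_{n,i})$ inside $f$ converges in law to the Gaussian pair whose distribution defines $\mu_f$. Since $f$ is of polynomial growth and $(X_{n,i}, Y_{n,i})$ has uniformly bounded moments of all orders (by Proposition~\ref{prop:Lp} and Lemma~\ref{lem:size-U}), this upgrades to convergence of the conditional expectations, and the $\frac1n$-weighted sum then vanishes by dominated convergence.

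First, I would decompose $X_{n,i} = \xi_n + \eta_n$, where $\xi_n = \sigma_{i/n-\eps}\overline{B^H}(g)^{n,\eps}_i/(k_n/n)^H$ is the signal contribution and $\eta_n = \overline{\rho Z}(g)^n_i/(k_n/n)^H$ is the noise contribution; also write $\zeta_n = Y_{n,i}$. Conditional on $\mathcal{F}$, $\xi_n$ is \emph{exactly} centered Gaussian with variance $V_1^{(n)} = \sigma^2_{i/n-\eps}(k_n/n)^{-2H}\sum_{j,\ell}g^n_j g^n_\ell\Cov(\Delta^{n,\eps}_{i+j-1}B^H, \Delta^{n,\eps}_{i+\ell-1}B^H)$. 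Using the $L^2$-bound on $\Delta^n_i B^H - \Delta^{n,\eps}_i B^H$ already derived in the truncation step (of order $n^{-1}\eps^{H-1}$), the truncated covariances can be replaced via Cauchy--Schwarz by the standard fBM autocovariances $n^{-2H}\Gamma^H_{|j-\ell|}$, so $V_1^{(n)}$ matches the target first argument $v_{1,n}$ of $\mu_f$ up to a vanishing error.

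Conditionally on $\mathcal{F}$, the noise part $\eta_n$ is a weighted sum of independent random variables $\rho_{k/n} Z_{k/n}$ with coefficients $c_k$ of order $1/k_n$ (after summation by parts exploiting $g(0)=g(1)=0$). A Lindeberg--Feller CLT then shows that $\eta_n \mid \mathcal{F}$ converges in distribution to $N(0, v_{2,n})$; the Lindeberg condition follows from the uniform moment bounds on $Z$ and $\rho$ together with the $|c_k| \lec 1/k_n$ estimate. For $\zeta_n$, I would compute its conditional mean, which by $L^p$-continuity of $\rho$ approximates $2v_{2,n}$, and bound its conditional variance using the $1$-dependence of the squared noise sequence by a quantity of order $n^{4H}/k_n^{4H+3}$; this vanishes exactly because $\kappa \geq 2H/(2H+1)$. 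Since $\xi_n$ (depending only on $B$) and $(\eta_n, \zeta_n)$ (depending only on $Z$) are conditionally independent given $\mathcal{F}$, the joint conditional limit $(\sqrt{v_{1,n}}\,W_1 + \sqrt{v_{2,n}}\,W_2, 2v_{2,n})$ follows. Passing from $\mathcal{F}_{i/n-\eps}$ to $\mathcal{F}_{(i-1)/n-\eps}$ changes the conditioning only by information over a $1/n$-long interval, which is negligible by continuity of $\sigma$ and $\rho$ in probability.

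The main obstacle I anticipate is the Lindeberg CLT for $\eta_n$ uniformly in $i$: because $Z$ is only assumed to have uniformly bounded moments (not Gaussian), one must control the CLT error quantitatively (e.g., by a Berry--Esseen-type bound or a direct characteristic-function argument) so that after summing $\lfloor nT\rfloor$ error terms against the $\frac1n$ prefactor the total discrepancy still tends to zero. A secondary subtlety lies in coupling this with the LLN for $\zeta_n$; the exponent constraint $\kappa \geq 2H/(2H+1)$ is precisely what simultaneously ensures $\Var(\zeta_n \mid \mathcal{F})\to 0$ while keeping $v_{2,n}$ itself of order one, so the joint Gaussian-plus-deterministic limit is non-degenerate.
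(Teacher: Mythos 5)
Your overall architecture coincides with the paper's: conditionally on $\mathcal{F}_{i/n-\eps}$ the signal part is exactly Gaussian, the noise part is handled by a CLT for a triangular array after summation by parts (using $g(0)=g(1)=0$), the second argument concentrates on its conditional mean via a $1$-dependence variance bound of order $n^{4H}k_n^{-4H-3}$, and conditional independence of the $B$- and $Z$-components assembles the limit $\mu_f$. Your remarks on the need for uniformity in $i$ of the CLT error and on the dual role of $\kappa\geq\frac{2H}{2H+1}$ are correct and in fact more careful than the paper's own rather informal treatment of these points.

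One step, however, would fail as written: identifying the conditional variance $V_1^{(n)}$ with $k_n^{-2H}\sum_{j,l}g^n_jg^n_l\Gamma^H_{|j-l|}$ by replacing the $k_n^2$ covariances \emph{entrywise} via Cauchy--Schwarz. Each entry carries an error of order $n^{-1-H}\eps^{H-1}$ (from $\lVert\Delta^n_jB^H-\Delta^{n,\eps}_jB^H\rVert_{L^2}\lec n^{-1}\eps^{H-1}$ times $\lVert\Delta^n_lB^H\rVert_{L^2}\lec n^{-H}$), so after summing over $j,l\leq k_n$ and normalizing by $(n/k_n)^{2H}$ the total error is of order $n^{H-1}k_n^{2-2H}\eps^{H-1}=n^{(1-H)(2\kappa-1)}\eps^{H-1}$, which \emph{diverges} whenever $\kappa>\frac12$ --- and $\kappa\geq\frac{2H}{2H+1}>\frac12$ is forced for every $H>\frac12$. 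The repair is to apply Cauchy--Schwarz once at the aggregated level,
\begin{equation*}
\big|\Var\big(\overline{B^H}(g)^{n,\eps}_i\big)-\Var\big(\overline{B^H}(g)^{n}_i\big)\big|\leq \big\lVert\overline{B^H}(g)^{n,\eps}_i-\overline{B^H}(g)^{n}_i\big\rVert_{L^2}\Big(\big\lVert\overline{B^H}(g)^{n,\eps}_i\big\rVert_{L^2}+\big\lVert\overline{B^H}(g)^{n}_i\big\rVert_{L^2}\Big),
\end{equation*}
with $\lVert\overline{B^H}(g)^{n,\eps}_i-\overline{B^H}(g)^{n}_i\rVert_{L^2}\lec k_nn^{-1}\eps^{H-1}$ and $\lVert\overline{B^H}(g)^{n}_i\rVert_{L^2}\lec(k_n/n)^{H}$ by Proposition~\ref{prop:Lp}, yielding a normalized error $n^{(1-H)(\kappa-1)}\eps^{H-1}\to0$; this is exactly how the paper's truncation lemma closes the analogous estimate. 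The same caveat applies to a second replacement you leave implicit: $\Cov(\Delta^n_jB^H,\Delta^n_lB^H)$ is \emph{not} equal to $n^{-2H}\Gamma^H_{|j-l|}$, because $B^H$ here is the Riemann--Liouville (type II) process with non-stationary increments; the paper invokes an external lemma for this approximation (valid because $i>n\eps$ keeps the increments away from the origin), and that error too must be controlled at the aggregated rather than entrywise level.
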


\begin{proof}
We first pass to the limit in the second argument, by showing  
  \begin{multline*}
 \lim_{\eps\to0} \limsup_{n \rightarrow \infty}  \sum_{i=\lfloor n\epsilon \rfloor + 1}^{\lfloor nT \rfloor} \E\bigg[\bigg(f\bigg( \frac{\sigma_{{i}/{n}-\epsilon}\overline{B^H}(g)^{n,\epsilon}_i + \overline{\rho Z}(g)^n_i}{(k_n/n)^{H}}, \frac{\widehat{\rho Z}(g)^n_i}{(k_n/n)^{2H}}\bigg)   \\
     - f\bigg( \frac{\sigma_{ {i}/{n}-\epsilon}\overline{B^H}(g)^{n,\epsilon}_i + \overline{\rho Z}(g)^n_i}{(k_n/n)^{H}}, \frac{2\rho^{2}_{i/n-\eps}}{(k_{n}/n)^{2H}}\sum_{j=1}^{k_{n}}(\Delta g^{n}_{j})^{2}\bigg)\bigg)\mathrel{\Big| }\mathcal{F}_{\frac{i}{n}-\epsilon} \bigg] = 0.
  \end{multline*}
  By another usage of the mean-value theorem as before, it is sufficient to show that as \(n \rightarrow \infty\),
  \begin{align}
 \lim_{n\to\infty}   \E \bigg[ \bigg| \frac{\widehat{\rho Z}(g)^{n}_{i}}{(k_{n}/n)^{2H}} - \frac{1}{(k_{n}/n)^{2H}}\sum_{j=1}^{k_{n}}(\Delta g^{n}_{j})^{2} (\rho^{2}_{  (i+j-1)/n}+\rho^2_{(i+j-2)/n})\bigg|^2 \bigg] = 0,\label{eq:toshow1}\\
   \lim_{\eps\to0}  \limsup_{n\to\infty} \E \bigg[ \bigg| \frac{1}{(k_{n}/n)^{2H}}\sum_{j=1}^{k_{n}}(\Delta g^{n}_{j})^{2} (\rho^{2}_{  (i+j-1)/n}+\rho^2_{(i+j-2)/n}) - \frac{2\rho^{2}_{  i/n-\eps}}{(k_{n}/n)^{2H}}\sum_{j=1}^{k_{n}}(\Delta g^{n}_{j})^{2} \bigg|^2 \bigg] = 0.\label{eq:toshow2}
  \end{align}
The second convergence  is easy, as the expectation in \eqref{eq:toshow2} is of order $((n^{2H}/k_n^{2H+1})\psi(k_n/n)+\eps)^2$, which goes to $0$ as $n\to\infty$ and $\eps\to0$ since $\psi(\tau)=\sup_{s,t\in[0,T],\lvert t-s\rvert \leq \tau} \E[\lvert \rho^2_t-\rho^2_s\rvert] \to0$ as $\tau\to0$ by the $L^2$-continuity of $\rho$. For \eqref{eq:toshow1}, we 
develop the square and obtain
  \begin{align*}
    & \frac{n^{4H}}{k_{n}^{4H}}\sum_{j,l = 1}^{k_{n}} (\Delta g^{n}_{j})^{2}(\Delta g^{n}_{l})^{2} \mathrm{Cov}\big((\Delta_{i+j-1}^{n}\rho Z)^{2},(\Delta_{i+l-1}^{n}Z)^{2} \big) 
  \end{align*}
 By the independence properties of $Z_t$, the covariance in the sum is $0$ unless \(|l - j| \leq 1\). Moreover, even if $|l-j|\leq 1$,  the covariance is bounded by a constant. Thus, the last display is $\lec n^{4H}k_{n}^{-4H} k_{n}k_{n}^{-4}$ and \(\Delta g^{n}_{i} \sim k_{n}^{-1}\), which tends to $0$ for any $\kappa\geq \frac{2H}{2H+1}$.

  To handle the first argument, observe that $ \sigma_{ {i}/{n}-\epsilon}\overline{B^H}(g)^{n,\epsilon}_i /{(k_n/n)^{H}}$
is $\calf_{i/n-\eps}$-conditionally Gaussian    with mean $0$ and variance
  \begin{multline*}
    \Var \bigg( \frac{\sigma_{ {i}/{n}-\epsilon}}{(k_{n}/n)^{H}}\sum_{j=\lfloor n\epsilon \rfloor + 1}^{k_{n}-1} g^{n}_{j}\Delta_{i+j-1}^{n,\epsilon}B^{  H}\mathrel{\Big|}\calf_{\frac in -\eps}\bigg) \\ =  \frac{\sigma^2_{ {i}/{n}-\epsilon}}{(k_{n}/n)^{2H}}\sum_{j,l = \lfloor n\epsilon \rfloor + 1}^{k_{n}-1}g^{n}_{j}g^{n}_{l}\Cov \Big( \Delta^{n,\epsilon}_{i+j-1}B^{  H}, \Delta^{n,\epsilon}_{i+l-1}B^{  H} \Big).
  \end{multline*}
In the same way we showed that we could truncate increments of $X^H$ or $B^H$ at level $\eps$, we can also reverse this step and approximate   the last display by
 \[    \frac{\sigma^2_{ {i}/{n}-\epsilon}}{(k_{n}/n)^{2H}}\sum_{j,l = \lfloor n\epsilon \rfloor + 1}^{k_{n}-1}g^{n}_{j}g^{n}_{l}\Cov \Big( \Delta^{n,\epsilon}_{i+j-1}B^{  H}, \Delta^{n,\epsilon}_{i+l-1}B^{  H} \Big)\approx \frac{\sigma^2_{{i}/{n}-\epsilon}}{k_n^{2H}}\sum_{j,l =\lfloor n\epsilon \rfloor + 1 }^{k_n-1} g^n_jg^n_l\Gamma^H_{|j-l|},\]
where the last step follows, for example, from \cite[Lemma~B.1]{CDL22}. In the last display $\approx_n$ means that the difference of the two sides goes to $0$ in $L^1$ as $n\to\infty$.

At the same time, 
\begin{equation}\label{eq:help3}
\frac{\ov{\rho Z}(g)^n_i}{(k_n/n)^H} = \frac{n^H}{k_n^H}\sum_{j=1}^{k_n-1} g^n_j \Delta^n_{i+j-1} \rho Z \approx_{n,\eps} \rho_{i/n-\eps}\frac{n^H}{k_n^H}\sum_{j=1}^{k_n-1} g^n_j \Delta^n_{i+j-1}  Z,
\end{equation}   
where $\approx_{n,\eps}$ means that  the difference of the two sides converges to $0$ in $L^1$ as $n\to\infty$ and $\eps\to0$. Since $g^n_j = g'(\xi^n_j)/k_n$,  we have by the  central limit theorem and the assumption $g(0)=g(1)=0$ that
\[ \sqrt{k_n}\sum_{j=1}^{k_n-1} g^n_j \Delta^n_{i+j-1}  Z= \sqrt{k_n}\bigg(g^n_{k_n-1}Z_{i+k_n-2}-g^n_1Z_{i-1} - \sum_{j=1}^{k_n-1} \Delta g^n_j  Z_{i+j-1}\bigg)  \stackrel{d}{\longrightarrow} N\bigg(0,\int_0^1 g(s)^2 ds\bigg). \]
If $\kappa>\frac{2H}{2H+1}$, it follows that \eqref{eq:help3} goes to $0$ in probability; if $\kappa= \frac{2H}{2H+1}$, \eqref{eq:help3} is approximately $\calf_{i/n-\eps}$-conditional  Gaussian with mean $0$ and variance $\theta^{2H+1}\rho_{i/n-\eps}^2 \int_0^1 g(s)^2 ds$. Since the last expression is also approximated by $\rho_{i/n-\eps}^2/(k_n/n)^{2H} \sum_{j=1}^{k_n} (\Delta g^n_j)^2$, the lemma is proved.
\end{proof}

Finally, we pass to the limit asserted in \eqref{eq:LLN}.
\begin{lemma} We have
  \begin{multline*}
\lim_{\epsilon \rightarrow 0} \limsup_{n\to\infty} \Bigg\lVert\frac{1}{n}\sum_{i =  \lfloor n\epsilon  \rfloor + 1}^{ \lfloor nT  \rfloor}\mu_f\bigg( \frac{\sigma^2_{ {i}/{n}-\epsilon}}{k_n^{2H}}\sum_{j,l = 1}^{k_n-1} g^n_jg^n_l\Gamma^H_{|j-l|}, \frac{\rho^2_{i/n-\eps}}{(k_n/n)^{2H}}\sum_{j=1}^{k_n}(\Delta g^n_j)^2 \bigg)  \\
            - \int_0^T\mu_f\bigg(\sigma_{s}^2\eta(g), \Theta^{2H+1}\rho^2_s\int_0^1g'(r)^2dr\bigg) ds\Bigg\rVert_{L_p}   = 0.
  \end{multline*}
\end{lemma}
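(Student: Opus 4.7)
My approach proceeds in three stages: (i) compute the $n\to\infty$ limits of the two arguments of $\mu_f$ uniformly in $i$; (ii) transfer these limits through $\mu_f$ via its continuity and polynomial growth; and (iii) pass the resulting discrete Riemann sum over $i$ to the continuous integral, then send $\eps\to0$.

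For the second argument, the mean-value theorem yields $\Delta g^n_j = k_n^{-1}g'(\xi^n_j)$ for some $\xi^n_j$, whence
\begin{equation*}
\frac{1}{(k_n/n)^{2H}}\sum_{j=1}^{k_n}(\Delta g^n_j)^2 = \frac{n^{2H}}{k_n^{2H+1}}\cdot\frac{1}{k_n}\sum_{j=1}^{k_n}g'(\xi^n_j)^2 \longrightarrow \Theta^{2H+1}\int_0^1 g'(r)^2\,dr,
\end{equation*}
by Riemann-sum approximation and $n^{2H}/k_n^{2H+1}\to \Theta^{2H+1}$, which follows directly from $k_n\sim n^{\kappa}/\theta$ and the definition of $\Theta$. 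For the first argument, since $\Gamma^H_r = \tfrac12\Delta^2|r|^{2H}$ is a discrete second difference and $g^n_0 = g^n_{k_n}=0$, a double summation by parts in $j$ and $l$ transfers both differences onto $g^n$ to yield, modulo vanishing boundary terms,
\begin{equation*}
\frac{1}{k_n^{2H}}\sum_{j,l=1}^{k_n-1}g^n_j g^n_l \Gamma^H_{|j-l|}=\frac{c}{k_n^2}\sum_{j,l=1}^{k_n}g'(\xi^n_j)g'(\xi^n_l)\Big(\frac{|j-l|}{k_n}\Big)^{2H}+o(1)
\end{equation*}
for an explicit constant $c$; the right-hand side is a Riemann sum converging to $\eta(g)$ since $g'$ is continuous and $(x,y)\mapsto|x-y|^{2H}$ is integrable on $[0,1]^2$.

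Given these two deterministic convergences (uniform in $i$), continuity of $\mu_f$ together with its polynomial growth and the uniform boundedness of $\sigma,\rho$ (from the localization at the beginning of the section) imply, by dominated convergence, that
\begin{multline*}
\mu_f\Big(\tfrac{\sigma^2_{i/n-\eps}}{k_n^{2H}}\sum_{j,l}g^n_j g^n_l\Gamma^H_{|j-l|},\,\tfrac{\rho^2_{i/n-\eps}}{(k_n/n)^{2H}}\sum_{j}(\Delta g^n_j)^2\Big) \\ - \mu_f\Big(\sigma^2_{i/n-\eps}\,\eta(g),\,\Theta^{2H+1}\rho^2_{i/n-\eps}\!\int_0^1 g'(r)^2\,dr\Big) \longrightarrow 0
\end{multline*}
in every $L^p$, uniformly in $i$. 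Averaging over $i$ preserves this $L^p$-convergence; after the change of variables $s = i/n-\eps$, the remaining discrete average is a Riemann sum that converges in $L^p$ to $\int_0^{T-\eps}\mu_f(\sigma^2_s\,\eta(g),\,\Theta^{2H+1}\rho^2_s\!\int_0^1 g'(r)^2\,dr)\,ds$ by the $L^p$-continuity of $\sigma$ and $\rho$; letting $\eps\to0$ extends the integral to $[0,T]$ by dominated convergence and the uniform boundedness of the integrand.

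The main obstacle is the double summation-by-parts identity: although conceptually standard, one must carefully verify that the boundary terms produced by the two shifts are genuinely of lower order than $k_n^{2H}$ and therefore vanish after normalization. A secondary subtlety, when $H<1/2$, is that $\Gamma^H_r$ may change sign, so the Riemann-sum convergence to the limiting double integral cannot be justified by monotonicity but rather by the absolute integrability of $|x-y|^{2H}$ on $[0,1]^2$ combined with the $C^1$ regularity of $g$.
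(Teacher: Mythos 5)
Your argument is correct and it hinges on the same two deterministic limits as the paper's proof, but the route differs in two places. For transferring the convergence through $\mu_f$, the paper applies the mean-value theorem and bounds $\partial_1\mu_f,\partial_2\mu_f$ by quantities of size one, whereas you invoke continuity, polynomial growth and dominated convergence; after localization all arguments live in a compact set, so the two devices are interchangeable, and your handling of the edge terms (size $\eps+\tfrac1n$) and of the replacement of $\si^2_{i/n-\eps},\rho^2_{i/n-\eps}$ by $\si^2_s,\rho^2_s$ via $L^p$-continuity matches the paper. The substantive difference is in the key limit $k_n^{-2H}\sum_{j,l}g^n_jg^n_l\Ga^H_{|j-l|}\to\eta(g)$: the paper sums by parts once in $l$, splits the result into three terms, evaluates each by one-dimensional Riemann sums, and then needs two further integrations by parts to recover the double-integral form of $\eta(g)$. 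You instead use $\Ga^H_r=\tfrac12\Delta^2|r|^{2H}$ together with $g(0)=g(1)=0$ to sum by parts in \emph{both} indices, landing directly on the two-dimensional Riemann sum $\tfrac{c}{k_n^2}\sum_{j,l}g'(\xi^n_j)g'(\xi^n_l)(|j-l|/k_n)^{2H}$, whose convergence is immediate because $g'$ is continuous and $(x,y)\mapsto|x-y|^{2H}$ is continuous and \emph{bounded} on $[0,1]^2$ for $H>0$ — your worry about integrability near the diagonal is moot, as there is no singularity. Your version is the probabilistic identity $\sum_{j,l}g^n_jg^n_l\Ga^H_{|j-l|}=\E[(\sum_jg^n_j\Delta_jB^H)^2]$ computed by Abel summation: more symmetric and shorter, at the price of leaving the constant $c$ and the vanishing of the boundary terms unverified, which is precisely the bookkeeping the paper does explicitly (carrying it out gives $c=-\tfrac12$, i.e.\ the quadratic form $-\tfrac12\int_0^1\int_0^1 g'(x)|x-y|^{2H}g'(y)\,dx\,dy$, which is what the paper's own chain of integrations by parts produces at the end of its final lemma). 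Both approaches are valid; yours would shorten the paper's final lemma considerably once the boundary terms are checked.
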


\begin{proof} The difference above is $I^{n}_1+I^{n}_2$, where 
 \begin{align*}
I^{n}_1&=  \sum_{i =  \lfloor n\epsilon  \rfloor + 1}^{ \lfloor nT  \rfloor}\int_{\frac{i-1}{n}}^{\frac{i}{n}}\Bigg\{\mu_f\bigg( \frac{\sigma^2_{ {i}/{n}-\epsilon}}{k_n^{2H}}\sum_{j,l = 1}^{k_n-1} g^n_jg^n_l\Gamma^H_{|j-l|}, \frac{\rho^2_{i/n-\eps}}{( {k_n}/{n})^{2H}}\sum_{j=1}^{k_n}(\Delta g^n_j)^2 \bigg)\\
	&\qquad\qquad\qquad\qquad\qquad\qquad\qquad\qquad\qquad - \mu_f\bigg(\sigma_{s}^2\eta(g), \Theta^{2H+1}\rho^2_s\int_0^1g'(r)^2dr\bigg)ds\Bigg\}, \\
            I^n_2      &= -\int_0^{\frac{ \lfloor n\epsilon  \rfloor}{n}}\mu_f\bigg(\sigma_{s}^2\eta(g), \Theta^{2H+1}\rho^2_s\int_0^1g'(r)^2dr\bigg)ds- \int_{\frac{ \lfloor nT  \rfloor}{n}}^T\mu_f\bigg(\sigma_{s}^2\eta(g), \Theta^{2H+1}\rho_s^2\int_0^1g'(r)^2dr \bigg) ds.
 \end{align*}
 Since the \(\mu_f\)-term is of size one, $I^n_2$ is of size \(\epsilon+\frac1n\) and thus asymptotically negligible.
 
 Hence, we only need to show that $I^n_1$ goes to $0$. 
 By mean-value theorem, 
 \begin{multline*}
I^n_1= \sum_{i = \left\lfloor n\epsilon \right\rfloor + 1}^{ \lfloor nT  \rfloor}\int_{\frac{i-1}{n}}^{\frac{i}{n}}(\partial_1\mu_f (\xi^n_{1,i}), \partial_2\mu_f(\xi_{2,i}^n) )   \\
           \times\bigg(\frac{\sigma^2_{ {i}/{n}-\epsilon}}{k_n^{2H}}\sum_{j,l = 1}^{k_n-1} g^n_jg^n_l\Gamma^H_{|j-l|} - \sigma_{s}^2\eta(g),  \frac{\rho^2_{i/n-\eps}}{({k_n}/{n})^{2H}}\sum_{j=1}^{k_n}(\Delta g^n_j)^2 - \Theta^{2H+1}\rho^2_s\int_0^1 g'(r)^2dr \bigg)^T ds 
  \end{multline*}
  Each partial derivative is of size one, so we only need to show that the differences  vanish in $L^p$. To this end, because $\si^2_{i/n-\eps} \to \si_s^2$ and $\rho^2_{i/n-\eps}\to \rho^2_s$ in $L^p$ as $n\to\infty$ and $\eps\to0$, uniformly on $[0,T]$, it remains to show that
  \[\frac{1}{k_n^{2H}}\sum_{j,l = 1}^{k_n-1} g^n_jg^n_l\Gamma^H_{|j-l|} \to\eta(g),\qquad \frac{1}{({k_n}/{n})^{2H}}\sum_{j=1}^{k_n}(\Delta g^n_j)^2 \to\Theta^{2H+1}\int_0^1 g'(r)^2dr.  \]
  The second convergence was already shown in \eqref{eq:help3} and the subsequent display. The first one is of independent interest and will be shown in a separate lemma.
  \end{proof}

\begin{lemma}
Recall $\eta(g)$ from \eqref{eq:etag}.	As $n\to\infty$,
	\[\frac{1}{k_n^{2H}}\sum_{j,l = 1}^{k_n-1} g^n_jg^n_l\Gamma^H_{|j-l|} \to\eta(g).\]
\end{lemma}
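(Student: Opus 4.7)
The key observation is that $\Gamma^H_r$ is precisely the autocovariance function of the stationary increments of a standard fractional Brownian motion: if $\wt B^H$ denotes a standard fBM on some auxiliary probability space and $\xi_j = \wt B^H_j - \wt B^H_{j-1}$, then $\E[\xi_j\xi_l]=\Gamma^H_{|j-l|}$. Consequently,
\[
\sum_{j,l=1}^{k_n-1} g^n_jg^n_l\,\Gamma^H_{|j-l|} = \E\bigg[\bigg(\sum_{j=1}^{k_n-1} g^n_j\xi_j\bigg)^2\bigg].
\]
This probabilistic reformulation clarifies the role of the boundary condition $g(0)=g(1)=0$: it enables a clean summation by parts and mirrors the formal identity $\int_0^1 g\, d\wt B^H = -\int_0^1 g'(s)\wt B^H_s\, ds$, which is valid precisely when $g$ vanishes at the endpoints.

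The plan then proceeds in three steps. First, using $g^n_0=g^n_{k_n}=0$ and $\wt B^H_0=0$, Abel summation gives
\[
\sum_{j=1}^{k_n-1} g^n_j\xi_j = -\sum_{j=1}^{k_n-1}(g^n_{j+1}-g^n_j)\,\wt B^H_j.
\]
Second, expanding the square of this expression and invoking $\E[\wt B^H_j\wt B^H_l]=\tfrac12(j^{2H}+l^{2H}-|j-l|^{2H})$ yields three contributions. The two boundary contributions (from $j^{2H}$ and $l^{2H}$) factor as a product whose first factor, $\sum_{m=1}^{k_n-1}(g^n_{m+1}-g^n_m) = g^n_{k_n}-g^n_1 = -g^n_1$, is $O(1/k_n)$, while the second factor is at most of order $k_n^{2H}$; their total contribution is therefore $O(k_n^{-1})$ after dividing by $k_n^{2H}$, hence negligible.

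Third, what remains is to analyse
\[
-\frac{1}{2k_n^{2H}}\sum_{j,l=1}^{k_n-1}(g^n_{j+1}-g^n_j)(g^n_{l+1}-g^n_l)\,|j-l|^{2H},
\]
which, by the $C^2$ Taylor expansion $g^n_{j+1}-g^n_j = g'(j/k_n)/k_n + O(1/k_n^2)$ uniformly in $j$, equals
\[
-\frac{1}{2k_n^2}\sum_{j,l=1}^{k_n-1}g'(j/k_n)\,g'(l/k_n)\Big(\frac{|j-l|}{k_n}\Big)^{2H} + O(1/k_n).
\]
The main term is a Riemann sum for the double integral in \eqref{eq:etag}, delivering $\eta(g)$ in the limit (modulo the sign required to make the limit a genuine variance). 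The only real obstacle is routine bookkeeping of the Taylor remainders across the $(k_n-1)^2$ terms of the double sum; because $g''$ is bounded on $[0,1]$ and $(|j-l|/k_n)^{2H}\le 1$, the aggregate error is $O(1/k_n)$, hence asymptotically negligible, and the proof is complete.
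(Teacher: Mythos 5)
Your proof is correct, but it takes a genuinely different route from the paper's. The paper works entirely with the deterministic double sum: it splits off the diagonal, telescopes $\Gamma^H_r=\overline{\Gamma}^H_r-\overline{\Gamma}^H_{r-1}$ with $\overline{\Gamma}^H_r=\frac12((r+1)^{2H}-r^{2H})$, performs two successive Abel summations (first in $l$, then in $j$), identifies each surviving piece as a Riemann sum, and finally undoes the discrete manipulations by two integrations by parts to reach the form in \eqref{eq:etag}. You instead recognize the sum as $\E[(\sum_j g^n_j\xi_j)^2]$ for fractional Brownian increments $\xi_j$, perform a single Abel summation at the level of the process, and exploit the exact covariance $\E[\wt B^H_j\wt B^H_l]=\frac12(j^{2H}+l^{2H}-|j-l|^{2H})$ so that both boundary contributions are killed in one stroke by $\sum_l(g^n_{l+1}-g^n_l)=-g^n_1=O(k_n^{-1})$; what remains is a single two-dimensional Riemann sum. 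This is shorter, avoids tracking several separately divergent pieces, and makes transparent both why $g(0)=g(1)=0$ is needed and why the limit must be nonnegative. One substantive point: your limit is $-\frac12\int_0^1\int_0^1 g'(x)g'(y)\lvert x-y\rvert^{2H}\,dx\,dy$, i.e.\ it carries a minus sign relative to \eqref{eq:etag} as printed. That sign is not an error on your part: the left-hand side of the lemma is $k_n^{-2H}$ times a variance, hence nonnegative, whereas $\frac12\int\int g'(x)g'(y)\lvert x-y\rvert^{2H}\,dx\,dy$ is negative for, e.g., $g(x)=2(x\wedge(1-x))$ and $H=\frac12$, where the limit must equal $\int_0^1 g(x)^2\,dx=\frac13$. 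Carrying out the two integrations by parts at the end of the paper's own proof likewise produces $-\frac12\int_0^1\int_0^1 g'(x)g'(y)\lvert x-y\rvert^{2H}\,dx\,dy$, so the discrepancy is a sign typo in the displayed definition of $\eta(g)$ rather than a gap in your argument; your parenthetical remark about the sign is exactly right.
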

  \begin{proof}
  Let us write \(\Gamma^H_{r} = \overline{\Gamma}^H_{r}- \overline{\Gamma}^H_{r-1} = \Delta \overline{\Gamma}^H_{r} \) with $\ov \Gamma^H_1=\frac12$.  
Using summation by parts, we have
 \begin{align*} 
   &\frac{1}{k_n^{2H}}\sum_{j = 1}^{k_n-1} g^n_jg^n_l\Gamma^H_{|j-l|} 
        = \frac{1}{k_n^{2H}}\sum_{j = 1}^{k_n-1} (g^n_j)^2 + \frac{2}{k_n^{2H}}\sum_{j= 1}^{k_n-2} g^n_j\bigg(\sum_{l= j+1}^{k_n-1} g^n_l\Delta\overline{\Gamma}^H_{l-j}\bigg)\\
        &\quad= \frac{1}{k_n^{2H}}\sum_{j = 1}^{k_n-1} (g^n_j)^2+ \frac{2}{k_n^{2H}}\sum_{j= 1}^{k_n-2} \bigg(g^n_jg^n_{k_n-1}\overline{\Gamma}^H_{k_n-1-j}- \frac{1}{2}g^n_jg^n_{j+1} - \sum_{l=j+1}^{k_n-2}g^n_j \Delta g^n_{l+1}\overline{\Gamma}^H_{l-j}\bigg)\\
        &\quad= \frac{1}{k_n^{2H}}\sum_{j = 1}^{k_n-1} g^n_j (g^n_j-g^n_{j+1} ) +\frac{2}{k_n^{2H}}\sum_{j = 1}^{k_n-2} g^n_j g^n_{k_n-1}\overline{\Gamma}^H_{k_n-1-j} - \frac{2}{k_n^{2H}}\sum_{j=1}^{k_n-3}\sum_{l=j+1}^{k_n-2} g^n_j \Delta g^n_{l+1}\overline{\Gamma}^H_{l-j}. 
    \end{align*}
By the mean-value theorem, the first term can be written as
 \begin{equation*}
-  \frac{1}{k_n^{2H}}\sum_{j = 1}^{k_n-1} g^n_j\Delta g^n_{j+1}
      = - \frac{1}{k_n^{2H}}\sum_{j = 1}^{k_n-1} g(\tfrac{j}{k_n}) (g(\tfrac{j+1}{k_n})-g(\tfrac{j}{k_n}) )= - \frac{1}{k_n^{2H+1}}\sum_{j = 1}^{k_n-1}g(\tfrac{j}{k_n})  {g'(\xi_j)}, 
    \end{equation*}
where \(\xi_j \in  (\frac{j}{k_n}, \frac{j+1}{k_n} )\). As \(n\rightarrow\infty\),  \(\frac1{k_n}\sum_{j = 1}^{k_n-1}g(\tfrac{j}{k_n})  {g'(\xi_j)}\to\int_0^1g(x)g'(x)dx\) by Riemann integration, so the term above vanishes as \(k_n\rightarrow \infty\).

Similarly, since $\ov \Ga^H_r = \frac12((r+1)^{2H}-r^{2H})$, we have
\begin{align*}
  \frac{2}{k_n^{2H}}\sum_{j = 1}^{k_n-2} g^n_j g^n_{k_n-1}\overline{\Gamma}^H_{k_n-1-j} &= \frac{1}{k_n^{2H}}\sum_{j = 1}^{k_n-2} g^n_j g^n_{k_n-1}  ((k_n-j)^{2H} - (k_n-j-1)^{2H} )\\
      &= \sum_{j = 1}^{k_n-2} g^n_j g^n_{k_n-1}\Big((1-\tfrac{j}{k_n})^{2H} - (1-\tfrac{j+1}{k_n})^{2H}\Big).
    \end{align*}
Let \(h(x) := (1-x)^{2H}\) and  \(h'(x) = -2H(1-x)^{2H-1}\). Then, again by Riemann integration,
\begin{align*}
    \frac{2}{k_n^{2H}}\sum_{j = 1}^{k_n-2} g^n_j g^n_{k_n-1}\overline{\Gamma}^H_{k_n-1-j} &= \sum_{j = 1}^{k_n-2} g^n_j g^n_{k_n-1}\Big(h(\tfrac{j}{k_n})^{2H} - h(\tfrac{j+1}{k_n})^{2H}\Big)= -\sum_{j = 1}^{k_n-2} g^n_j g^n_{k_n-1}\frac{h'(\xi^n_j)}{k_n}\\
  &  \to 2Hg(1)\int_0^1g(x)(1-x)^{2H-1}dx=0.
 \end{align*}
 Finally,
 \begin{align*}
  -\frac{2}{k_n^{2H}}\sum_{j=1}^{k_n-3}\sum_{l=j+1}^{k_n-2} g^n_j \Delta g^n_{l+1}\overline{\Gamma}^H_{l-j} = -\frac{1}{k_n^{2H}}\sum_{j=1}^{k_n-3} g^n_j \sum_{l=j+1}^{k_n-2}[(l-j+1)^{2H}-(l-j)^{2H}]\Delta g^n_{l+1}
    \end{align*}
    Thus, by summation by parts, this term is equal to
    \begin{align*}
    & -\frac{1}{k_n^{2H}}\sum_{j=1}^{k_n-3} g^n_j\bigg(\Delta g^n_{k_n-1}(k_n-1-j)^{2H} -\Delta g^n_{j+2} -\sum_{l=j+1}^{k_n-3} (l-j+1)^{2H}(\Delta g^n_{l+2}-\Delta g^n_{l+1})\bigg)\\
      &\quad= -\frac1{k_n}\sum_{j=1}^{k_n-3}\bigg[ g(\tfrac{j}{k_n}){g'(\xi^n_{k_n-1})}(1-\tfrac{j+1}{k_n})^{2H} -k_n^{-2H}g(\tfrac{j}{k_n}){g'(\xi^n_{j+2})} -\frac{g(\tfrac{j}{k_n})}{k_n}\sum_{l=j+1}^{k_n-3} (\tfrac{l-j+1}{k_n})^{2H} {g''(\xi^n_{l+2})}\bigg]\\
      &\quad\to 
      -g'(1)\int^1_0g(x)(1-x)^{2H}dx +\int_0^1\int_x^1g(x)(y-x)^{2H}g''(y)dydx 
      \end{align*}
Integrating by parts shows that the last line is equal to 
\[ -2H \int_0^1\int_x^1 g(x)(y-x)^{2H-1} g'(y) dy dx. \]
Interchanging the $dy$- with the $dx$-integral and integrating by parts one more time result in the form of $\eta(g)$ stated in \eqref{eq:etag}.
%
\end{proof}


\subsection*{Acknowledgments}
The Dome C data were acquired in the frame of the projects ``Mass lost in wind flux'' (MALOX) and ``Concordia multiprocess atmospheric studies'' (COMPASS) sponsored by PNRA. And a special thanks to Dr.\ Igor Petenko of CNR ISAC for running the field experiment at Concordia station.

	\bibliographystyle{abbrv}
\bibliography{mybib}

\end{document}